\def\R{{\mathbb R}}
\def\C{{\mathbb C}}
\def\Z{{\mathbb Z}}
\def\T{{\mathbb T}}
\def\al{{\alpha}}
\def\ga{{\gamma}}
\def\Si{{\Sigma}}
\def\Te{{\Theta}}
\def\e{{\varepsilon}}
\newcommand{\Sys}{\mathfrak{S}}
\newcommand{\vol}{{\rm vol}}
\def\a{{\bf a}}
\def\b{{\bf b}}
\def\G{{\mathcal G}}
\def\L{{\mathcal L}}
\def\H{{\mathcal H}}
\def\F{{\mathcal F}}
\def\inf{\mbox{inf}}
\def\vol{\mbox{vol}}
\def\sys{\mbox{sys}}
\def\*sys{\mbox{*sys}}
\def\*'sys{\mbox{*'sys}}
\def\log{\mbox{log}}
\numberwithin{equation}{section}
\newenvironment{maintheorem}[1]
    {\innertheorem}
    {\endinnertheorem}
\newenvironment{mainproposition}[1]
    {\innerproposition}
    {\endinnerproposition}
\newtheorem{theorem}{Theorem}[section]
\newtheorem{proposition}[theorem]{Proposition}
\newtheorem{corollary}[theorem]{Corollary}
\newtheorem{lemma}[theorem]{Lemma}
\newtheorem{definition}[theorem]{Definition}
\newtheorem{conjecture}[theorem]{Conjecture}
\newtheorem*{definition*}{Definition}
\newtheorem*{proposition*}{Proposition}
\newtheorem*{corollary*}{Corollary}
\newtheorem*{conjecture*}{Conjecture}
\theoremstyle{definition}
\newtheorem{exemple}[theorem]{Example}
\newtheorem{remark}[theorem]{Remark}
\newtheorem{question}[theorem]{Question}
\title[Systolic volume of homology classes]{Systolic volume of homology classes}
\author[I.~Babenko]{Ivan Babenko}
\address{Institut de Math\'ematiques et de Mod\'elisation de Montpellier - Universit\'e \linebreak  Montpellier 2}
\email{babenko@math.univ-montp2.fr}
\author[F.~Balacheff]{Florent Balacheff}
\address{Laboratoire Paul Painlev\'e - Universit\'e Lille 1}
\email{Florent.Balacheff@math.univ-lille1.fr}
\date{\today}
\subjclass{Primary 53C23; Secondary 20J06, 20F99}
\keywords{Systolic volume, integer homology of a
group,  representability of a homology class
.}
\thanks{This work was partially supported by the following grants: RFSF 10-01-00257-a and ANR 12-BS01-0009.}
\begin{document}

\begin{abstract}
Given an integer homology class of a finitely presentable group,
the systolic volume quantifies how tight could be a geometric
realization of this class.
In this paper, we study various aspects of this numerical
invariant showing that it is a complex and powerful tool to
investigate topological properties of homology classes of finitely
presentable groups.
\end{abstract}

\maketitle

\section{Introduction}

Let  $G$ be a finitely presentable group and fix
a non trivial homology class $\a \in H_m(G,\Z)$ of dimension $m\geq 1$.
In order to study topological and algebraic properties of the class $\a$,
we consider the various ways this class can be realized by
a pseudomanifold endowed with a polyhedral metric. For such realizations,
the two main geometrical quantities are the volume of the pseudomanifold and
the length of loops representing non-trivial elements of $G$.
The systolic volume turns out to be the simplest way to compare
these geometric quantities in order to form a topological invariant and is defined as follows.\\

A {\it geometric cycle $(X,f)$ representing a class $\a$} is a
pair $(X,f)$ consisting of an orientable pseudomanifold $X$ of
dimension $m$ and a continuous map $f : X \to K(G,1)$ such that
$f_\ast[X]=\a$ where $[X]$ denotes the fundamental
class of $X$ and $K(G,1)$ the Eilenberg-MacLane
space\footnote{This definition slightly differs from the original
one in \cite{Gro83} where in addition a geometric cycle is
provided with a polyhedral metric on $X$.}. The representation is
said to be {\it normal} if in addition the induced map $f_\sharp :
\pi_1(X) \to G$ is an epimorphism. Given a geometric cycle
$(X,f)$ and a polyhedral metric $g$ on $X$ (see \cite{Bab06} for a definition), the  {\it relative homotopic systole} $\sys_f(X,g)$ is defined as the least length of a loop $\gamma$
of $X$ such that $f\circ \gamma$ is not contractible. The {\it
systolic volume} of $(X,f)$ is then the value
$$
\Sys_f(X):=\underset{g}{\inf}\, \frac{\vol(X,g)}{\sys_f(X,g)^m},
$$
where the infimum is taken over all polyhedral metrics $g$  on $X$
and $\vol(X,g)$ denotes the $m$-dimensional volume of $X$. When $f : X \to K(\pi_1(X),1)$ is the classifying map
(induced by an isomorphism between fundamental groups), we
simply denote by $\Sys(X)$ the systolic volume of the pair
$(X,f)$.  From \cite[Section 6]{Gro83}, we have that for any  dimension $m\geq 1$
$$
\sigma_m:= \underset{(X,f)}{\inf}\, \Sys_f(X) >0,
$$
the infimum being taken over all geometric cycles $(X,f)$
representing a non trivial homology class of dimension  $m$. The
following notion was introduced by Gromov (see \cite[Section
6]{Gro83}):

\begin{definition*}\label{def:sys}
The systolic volume of the pair $(G, \a)$ is defined as the number
$$
\Sys(G, \a):=\underset{(X,f)}{\inf} \, \Sys_f(X),
$$
where the infimum is taken over all geometric cycles $(X,f)$
representing the class $\a$.
\end{definition*}

Any integer class admits a representation by a geometric cycle, see
Theorem \ref{th:realisationintro} below. 
The systolic volume of $(G,\a)$ is thus well defined and
satisfies $\Sys(G, \a)\geq \sigma_m$. But it is not clear whether the
infimum value $\Sys(G, \a)$ is actually a minimum and what the
structure of a minimal geometric cycle would be. When the class  $\a$ is representable by a manifold, the systolic volume coincides with the systolic volume of any normal representation of $\a$ by a manifold, see
\cite{Bab06, Bab08, Bru07}. A manifold is an example of {\it
admissible} pseudomanifold, that is a special
type of pseudomanifolds for which any element of the fundamental
group can be represented by a curve not going through the singular
locus of $X$. In this article, we first prove the following
result, see section \ref{sec:representation}.

\begin{maintheorem}{I} \label{th:realisationintro}
Let $G$ be a finitely presentable group.
Any homology class $\a \in H_m(G, \Z)$ admits a normal representation by an admissible geometric cycle.
Furthermore, any such representation by an admissible geometric cycle  $(X,f)$ is minimal in the sense that
$$
\Sys_{f}(X)=\Sys(G, \a).
 $$
\end{maintheorem}

Here an admissible geometric cycle $(X,f)$ stands for a geometric
cycle whose pseudomanifold $X$ is admissible. 
Thus the infimum in the definition of systolic volume of a homology class is a minimum.
Furthermore the systolic volume of an admissible orientable pseudomanifold $X$ depends only
on the image of its fundamental class $f_\ast[X] \in H_m(\pi_1(X),\Z)$.
In section \ref{sec:nilp} we exhibit an example showing that the condition
of normalization (that is, $f_\sharp$ is an epimorphism between fundamental groups)
can not be relaxed in our theorem. Finally observe that working with the class of pseudomanifolds is necessary and not merely a formal generalization: there exist classes which are not representable by manifolds, see \cite{Thom54}.\\

In order to understand the systolic volume invariant, we study its
distribution along the real line. Here two phenomena appear.
First, the systolic volume function does not avoid arbitrarily
large intervals.

\begin{mainproposition}{I}\label{prop:densintro}
Let $m \geq 3$. For any interval $I \subset \R^+$ of length at
least $\sigma_m$, there exists a pair $(G,\a)$ consisting of a
finitely presentable group and a homology class of dimension $m$
such that $\Sys(G,\a) \in I$.
\end{mainproposition}

Secondly, there is no finiteness result for systolic volume in
dimension $m\geq 3$. In order to avoid irrelevant non-finiteness results, we
introduce the following definition.  A class $\a \in H_m(G,\Z)$ is
said to be {\it reducible} if there exists a subgroup $H \varsubsetneq
G$ and a class $\b \in H_m(H, \Z)$ such that $i_{\ast}(\b) = \a$
where $i$ denotes the canonical inclusion. Otherwise the class
will be called {\it irreducible}. If the class is reducible then
$\Sys(G,\a)\leq \Sys(H,\b)$ by definition. Now observe that it is trivial to form infinite sequences of classes with bounded systolic volume which reduce to the same class. Indeed consider $b \in H_m(H,\Z)$ a homology class of dimension $m\geq 2$. Then for any positive integer $n$
$$
\Sys(H\ast\underset{n}{\underbrace{\Z \ast \ldots
\ast\Z}},\a_n)\leq \Sys(H,\b)
$$
where $\a_n=(i_n)_\ast(\b)$ denote the image of the class $\b$ by the natural inclusion $i_n : H \hookrightarrow H\ast\Z \ast \ldots
\ast\Z$.
The existence of infinite sequences of pairewise distinct irreducible classes with bounded systolic volume is much more meaningfull and will be proven in section \ref{sec:finitude}.

\begin{maintheorem}{II}\label{th:nonfinitudeintro}
For any dimension $m\geq 3$ there exists an infinite sequence of
finitely presentable pairwise distinct groups
$\{G_i\}$ for which at least one irreducible class  $\a_i \in
H_m(G_i, \Z)$ satisfies $\Sys(G_i,\a_i) \leq 1$.
\end{maintheorem}

So we have to introduce topological or algebraic restrictions in
order to hope finiteness results. For instance, given a finitely
presentable group $G$, a homology class $\a \in H_m(G,\Z)$ and a positive number $T$, the
number of integer multiple classes $k\a$ whose systolic volume is less than $T$ is at least $T . \ln T$ (up to some multiplicative constant). More precisely, the following theorem will be proven in section
\ref{sec:multiple}.

\begin{maintheorem}{III}\label{th:multipleintro}
Let $G$ be a finitely presentable group and $\a \in H_m(G, \Z)$
where $m\geq 3$. There exists a positive number $C(G,\a)$
depending only on the pair $(G,\a)$ such that
$$
\Sys(G,k\a) \leq C(G,\a) \cdot \frac{k}{\ln(1+k)}
$$
for any positive integer $k$. In particular,
$$
\lim_{k \to \infty} \frac{\Sys(G,k\a)}{k}=0.
$$
\end{maintheorem}

This result shows that the
systolic volume of multiples of a class is a sublinear function, which is remarkable.
For some special classes $\a$ (for which simplicial volume is not
zero, see section \ref{sub:hyp}), we know after Gromov
\cite{Gro83} that there exists a positive number $C'(G,\a)$
depending only on the pair $(G,\a)$ such that
\begin{equation}\label{eq:lower.bound}
\Sys(G,k\a) \geq C'(G,\a) \cdot \frac{k}{(\ln(1+k))^m}.
\end{equation}
Moreover, for fundamental groups $\pi_l$ of orientable surfaces
$\Sigma_l$ of genus $l\geq 1$ and for the corresponding
fundamental classes $[\Sigma_l]$, we know by \cite{Gro83} and
\cite{BS94} that
$$
\Sys(\pi_l,k[\Sigma_l])\sim \frac{k}{(\ln(1+k))^2},
$$
where $f\sim g$ means that there exists some positive constants
$c$ and $C$ such that $c.f \leq g \leq C.f$. This naturally leads
to the following conjecture.

\begin{conjecture*}
Let $G$ be a finitely presentable group and $\a \in H_m(G, \Z)$ a class of non zero simplicial volume where
$m\geq 3$. Then 
$$
\Sys(G,k\a)\sim \frac{k}{(\ln(1+k))^m}.
$$
\end{conjecture*}

In section \ref{sec:nilp}, we explore the case of nilmanifolds,
and more specifically the case of the Heisenberg group of
dimension $3$. We obtain a new illustration of the possible
behaviour of the systolic volume of cyclic coverings. The study of
the systolic volume of cyclic coverings in terms of the number of
sheets has been suggested in \cite{Gro96}, and the first result in
this direction can be found in \cite{BB05}.

The Heisenberg group $\H$ of dimension $3$ is the group of
triangular matrices
$$
\left\{\left(
\begin{array}{ccc}
1 & x & z \\
0 & 1 & y \\
0 & 0 & 1
\end{array}
\right) \mid \, x, y, z \in \R \right\}.
$$
The subset $\H(\Z)$ of $\H$ composed of matrices with integer
coefficients ({\it i.e.} matrices for which  $x, y, z \in \Z$) is
a lattice, and we will denote by $M_{\H} = \H / \H(\Z)$ the
corresponding nilmanifold. First of all, we obtain the
following explicit upper bound for the systolic volume of
multiples of the fundamental class $M_\H$.
\begin{maintheorem}{IV}\label{th:Heisintro}
Let $\a = [M_{\H}] \in H_3(\H(\Z), \Z)$ be the fundamental class
of $M_\H$. Then
$$
\Sys(\H(\Z),k\a) \leq 19 \cdot \Sys(\H(\Z),\a)
$$
 for any integer  $k\geq 1$.
\end{maintheorem}
The constant appearing here is the one involved in the resolution
of the classical Waring problem (see \cite{BDD86}): any integer
number decomposes into a sum of at most $19$ fourth powers. The
idea of using the solution of the Waring problem in order to bound
from above the function $\Sys(G, k\a)$ when
$(G,\a)=(\H(\Z),[M_{\H}])$  carries over
 to any pair $(G,\a)$ where $G$ is a nilpotent graded group without torsion and $\a$ denotes the fundamental class of the corresponding nilmanifold, see Theorem \ref{th:nil}.

Now consider the sequence of lattices
$\{\H_n(\Z)\}_{n=1}^{\infty}$ of $\H$, where $\H_n(\Z)$ denotes
the subset of matrices whose integer coefficients satisfy $x \in
n\Z$ and $y, z \in \Z$. Denote by $M_{\H_n} = \H / \H_n(\Z)$ the
corresponding nilmanifolds. The manifold $M_{\H_n}$ is a cyclic
covering with $n$ sheets of $M_\H$, and the proof of Theorem \ref {th:multipleintro} implies that
$$
\Sys(M_{\H_n}) \leq C \cdot \frac{n}{\ln (1+n)},
$$
see Remark \ref{rem:cyclic}.
The fact that the function $\Sys(M_{\H_n})$ goes to infinity is not obvious, and its proof relies on a new invariant for groups called {\it simplicial complexity} and defined in \cite{BBB14}. More precisely, we have the following.

\begin{proposition*}\cite{BBB14}\label{cor:Heisintro}
The function $\Sys(M_{\H_n})$ satisfies the following inequality:
$$
\Sys(M_{\H_n}) \geq a\frac{\ln n}{\exp(b\sqrt{\ln \ln n})} ,
$$
where $a$ an $b$ are two positive constants. 
\end{proposition*}

In particular,
$$
\lim_{n\to +\infty} \Sys(M_{\H_n})=+\infty.
$$
Note that in this case $\| M_{\H_n} \|_{\Delta} = 0$ and the
lower bound (\ref{eq:lower.bound})
does not apply. For any integer $n$
the manifold $M_{\H_n}$ gives a non-normal realization of the
class $n[M_\H]$.
So the normalization's condition in Theorem \ref{th:realisationintro} cannot be relaxed.\\

\section{Manifolds with singularities as extrema of the systolic volume}\label{sec:representation}

The systolic volume of a homology class is defined by an infimum. We may wonder if this infimum is reached and what would be the structure of a minimizing pseudomanifold. If the homology class $\a$ is realized by a manifold, we know that its systolic volume is a minimum which coincides with the systolic volume of any normal representation of $\a$ by a manifold, see \cite{Bab06, Bab08, Bru07}. But there exist classes  which do not admit representations by manifolds (see \cite{Thom54}), and for such a class $\a$ it is not even clear whether $\Sys(G, \a)$ is actually a minimum.\\

Let $X$ be a pseudomanifold of dimension $m$. The {\it singular
locus} of $X$ is by definition the set $\Si(X)$ of points of $X$
which do not have a neighbourhood homeomorphic to an
$m$-dimensional ball. By definition of a pseudomanifold, $\Si(X)$
is a simplicial subcomplex of codimension at least $2$.

\begin{definition} A pseudomanifold $X$ is said to be {\it admissible}
if the natural inclusion $X \setminus \Si(X) \subset X$ induces
an epimorphism on fundamental groups.
\end{definition}

That is, a pseudomanifold is admissible if any element of the
fundamental group can be represented by a loop of $X \setminus
\Si(X)$. A geometric cycle $(X,f)$ representing some homology
class $\a$ will be called {\it admissible} if the pseudomanifold
$X$ is admissible.

\begin{exemple} Let $M$ be a triangulated manifold and $N \subset M$ be
a simplicial subcomplex  of codimension $\geq 2$. Denote by ${\sharp}N$
the set of connected components of $N$.
The simplicial complex $M /{\sharp}N$ obtained from $M$ by
contracting the connected components of $N$ into distinct
points is an admissible pseudomanifold. The singular locus $\Si(M
/{\sharp}N)$ consists of the points corresponding to the connected
components of $N$.
\end{exemple}

\begin{exemple}\label{ex:sing}
Let $M$ be a manifold with boundary $\partial M
= A \times P$ where $A$ is a manifold and $P$ is a connected
manifold. The result of the fibred contraction of $P$ is an
admissible pseudomanifold $\overline{M}$ homeomorphic to the space
$$
M \mathop{\cup}\limits_{\partial M} A\times CP
$$
where  $CP$ stands for the cone over $P$ and the singular locus
$\Si(\overline{M})$ is homeomorphic to $A$. Remark that if  $P$
is simply connected  then $\pi_1(\overline{M})=\pi_1(M)$. The
pseudomanifold $\overline{M}$ obtained that way is a particular
example of {\it singular $P$-manifold}, see \cite{Baas73} and subsection \ref{sec:modelesing}
for the general construction.
\end{exemple}

Remark that an admissible pseudomanifold of dimension $2$ is a
surface. In particular it does not possess any singularity. \\

Theorem \ref{th:realisationintro} is a direct consequence of the following two propositions.

\begin{proposition}\label{thm:realisation} Let $G$ be a finitely presentable
group and $\a \in H_m(G, \Z)$ be a homology class of dimension $m \geq 3$.
Suppose that there exists a normal representation of the class $\a$ by
an admissible geometric cycle $(X,f)$. Then
$$
\Sys(G, \a)=\Sys_{f}(X).
 $$
\end{proposition}
The condition of normalisation saying that {\it $f_\sharp$ is an epimorphism}
can not be dropped, see section \ref{sec:nilp} and the example of the Heisenberg group. \\

The following proposition, together with Proposition \ref{thm:realisation},
shows that for any pair $(G,\a)$  the systolic volume $\Sys(G,
\a)$ is actually a minimum.

\begin{proposition}\label{prop:representation} Let $K$ be a
CW-complex whose fundamental group is finitely
generated.
Any homology
class $\a \in H_*(K, \Z)$ admits a normal representation by an
admissible geometric cycle $(X,f)$.
\end{proposition}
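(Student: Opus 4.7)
The plan is to build the desired cycle in three stages: produce any geometric representation of $\a$, modify it to be normal, and finally modify it to be admissible.

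First, I would invoke the classical theorem (going back to Thom and Sullivan) that every integer homology class of a CW-complex admits a representation by an $m$-dimensional orientable pseudomanifold. Concretely, a representative simplicial cycle $c = \sum n_i \sigma_i$ of $\a$ (in a simplicial approximation of $K$) gives rise to a pseudomanifold $Y_0$ by taking $|n_i|$ oriented copies of each simplex $\sigma_i$ and gluing them along their codimension-one faces according to the matchings imposed by $\partial c = 0$; the natural map $g_0 : Y_0 \to K$ then realizes $\a$.

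Second, to ensure normality, I would pick $n$ distinct regular points of $Y_0$ (points in $Y_0 \setminus \Si(Y_0)$) and perform connected sums at these points with copies of $S^1 \times S^{m-1}$, one per generator $g_1,\ldots,g_n$ of $G = \pi_1(K)$. This operation takes place in the regular part, so the pseudomanifold structure and the singular locus are unchanged. Extend the map to $g : Y_1 \to K$ so that the $i$-th added $S^1$-factor carries a loop $\gamma_i$ representing $g_i$, while each $S^{m-1}$-factor collapses to the basepoint. Since each $S^1 \times S^{m-1}$ is mapped to $K$ through a composition factoring through $S^1$, its fundamental class maps to zero in $H_m(K,\Z)$ for $m \geq 2$, so the class $\a$ is preserved. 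By construction every generator of $G$ is realized by a loop lying in the regular part, so that both $g_\sharp$ and the composition $\pi_1(Y_1 \setminus \Si) \to \pi_1(Y_1) \to G$ become surjective.

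Third, to achieve admissibility, I would resolve those singular strata that obstruct the $\pi_1$-surjectivity of the inclusion $Y_1 \setminus \Si(Y_1) \hookrightarrow Y_1$. Locally this obstruction arises when the link of a singular stratum fails to retract onto the nearby regular part---the prototypical example being an isolated singular point whose link is disconnected, as in the space $S^3/(N \sim S)$, where $\pi_1 = \Z$ but the nontrivial loops unavoidably pass through the singular point. Each such stratum can be replaced by a smooth bordism of its link reattaching the nearby regular pieces: in the disconnected-link case $L = L_1 \sqcup L_2$ with $L_1 \cong L_2$ (which can be arranged by a suitable subdivision), the conical neighbourhood $cL$ is replaced by the cylinder $L_1 \times I$, turning the singular point into a regular handle. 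Such local modifications map trivially into $K$ and preserve the fundamental class.

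The main obstacle is the global consistency of these resolutions, namely ensuring they can be performed simultaneously at all problematic strata while preserving $\a$. A cleaner alternative is to engineer Stage~1 so that all links of positive-codimension strata are connected from the outset---possible by taking a sufficiently fine subdivision of $K$ and choosing the gluing matchings in the construction of $Y_0$ to pair simplices across each codimension-one face in a way that makes every vertex link connected. Then $Y_0$ is admissible automatically and only Stage~2 is needed.
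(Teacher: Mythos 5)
Your Stage~1 and Stage~2 are sound and in the same spirit as the paper's final ``add $1$-handles'' step, but Stage~3 contains a genuine gap, and since admissibility is precisely the hard part of this proposition, the gap is fatal as written.

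Your local resolution argument asserts that a singular point with disconnected link $L = L_1 \sqcup L_2$ can be resolved because ``$L_1 \cong L_2$ (which can be arranged by a suitable subdivision).'' This is not correct: the homeomorphism types of the link components are intrinsic invariants of the pseudomanifold at that point and are unchanged by subdivision. There is no reason for the components of a link to be mutually homeomorphic (or even cobordant), so the replacement $cL \rightsquigarrow L_1 \times I$ is unavailable in general. More fundamentally, replacing a cone $cL$ by an arbitrary bordism $W$ of $L$ requires $L$ to bound, and $W$ must admit an extension of $f$ over it without disturbing $f_*[X]$; neither is automatic. Your fallback --- engineering Stage~1 so that all links are connected by a clever choice of gluing matchings on the simplices of a cycle $\sum n_i \sigma_i$ --- is also unjustified: the combinatorics of which $(m-1)$-faces must pair up is heavily constrained by $\partial c = 0$, and there is no argument that such matchings with connected links always exist. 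You have also not addressed admissibility for higher-dimensional singular strata, where ``connected vertex links'' need not suffice.

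The paper avoids exactly this difficulty by invoking Baas's theory of manifolds with prescribed singularity type. Taking $S = \{P_1, P_2, \ldots\}$ with $P_k$ representing the polynomial generators of $\Omega^U_*$, Baas's theorem produces an $S$-singular manifold $M_S$ with $f_*[M_S] = \a$ whose singular locus has a controlled local product structure $M(\omega) \times cP_i$. The only obstruction to admissibility is then the possible disconnectedness of the $P_i$'s, which is handled cleanly by Remark~\ref{rem:connexe}: one refines $S$ to the set $T$ of connected components of the $P_i$'s and passes to the associated $T$-singular manifold $M_T$, which maps onto $M_S$ by a degree-$1$ map and is automatically admissible. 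Your direct simplicial realization gives no such structural control over $\Si(X)$, which is why your Stage~3 cannot be made rigorous without an additional idea. If you want to pursue a proof along your lines, you would need to isolate and prove a lemma guaranteeing that the realization of Stage~1 can be chosen so that the inclusion of the regular part is $\pi_1$-surjective; in the paper that role is played by the Milnor--Novikov description of $\Omega^U_*$ together with Baas's representability theorem.
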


Before proving Proposition \ref{thm:realisation} and Proposition \ref{prop:representation}, we
need some technical results.

\subsection{Technical lemmas}

Hopf's trick perfectly adapts to the setting of admissible
pseudomanifolds. Consider a map
$$
f: (X, X_1) \longrightarrow (Y, Y_1)
$$
between two relative manifolds of the same dimension $m \geq 3$.
Suppose that  $f$ is transversal at $y \in Y \setminus Y_1$, {\it
i.e.} there exists an embedded  $m$-disk $D$ such that
\begin{itemize}
\item $y \in D \subset Y \setminus Y_1$ ;
\item $f^{-1}(D) = \cup_{i=1}^n D_i $ is a disjoint union of $m$-disks embedded in $X \setminus X_1$ ;
\item the restriction of $f$ to $f^{-1}(D)$ is a covering map with base space $D$ and $n$ sheets.
\end{itemize}
Set $x_i = D_i \cap f^{-1}(y)$. The technical trick by Hopf is
essentially contained in the the following lemma.

\begin{lemma}\label{lem:Hopf1}
Suppose that there exists an  $m$-disk embedded in  $D' \subset X
\setminus X_1$ with the following properties:

\begin{enumerate}
\item $D_1,D_2 \subset D'$ and $D' \cap D_i = \emptyset$ if $i >2$ ;

\item for any path $\gamma$ from $x_1$ to  $x_2$ in  $D'$, the loop
$f(\gamma)$ is contractible in $Y$ ;

\item for any orientation of $D$, the orientations on  $D_1$ and $D_2$ induced  by $f$ are not compatible in $D'$.

\end{enumerate}
Then there exists a homotopy $\{f_t\}_{0 \leq t \leq 1}$ of $f_0 =
f$ which is constant on $X \setminus D'$ and such that
$$
f^{-1}_1(D) = \mathop{\cup}\limits_{i > 2} D_i,
$$
the last union being empty if $n=2$.
\end{lemma}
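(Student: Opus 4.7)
The plan is to carry out a relative version of Hopf's classical trick for cancelling a pair of preimages of opposite local degree; the three hypotheses are exactly what is needed to make the cancellation go through in a non-simply-connected target. The whole construction takes place inside the manifold part $X \setminus X_1$, and the hypothesis $m \geq 3$ provides the general position needed for the path $\gamma$ (in particular, ensuring that after perturbation $\gamma$ is embedded).

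First, using (1) and a transversality argument, I would replace $\gamma$ by a piecewise-linearly embedded arc in $D' \setminus \bigcup_{i > 2} D_i$ joining $x_1$ to $x_2$, and choose a regular neighborhood $N \subset \mathrm{int}(D')$ of $D_1 \cup \gamma \cup D_2$. Then $N$ is an embedded $m$-disk containing $D_1$ and $D_2$ in its interior, disjoint from the remaining $D_i$, and such that $f^{-1}(D) \cap N = D_1 \cup D_2$ while $f(\partial N) \subset Y \setminus D$.

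Next I would argue that $f|_N \colon (N, \partial N) \to (Y, Y \setminus D)$ is homotopic rel $\partial N$ to a map $g \colon N \to Y \setminus D$. The relevant obstruction lies in a torsor over $\pi_m(Y, Y \setminus D) \cong \Z$, where the identification with $\Z$ comes from excision together with the manifold structure of $Y$ near $D$, and is realized concretely by the signed count of preimages in $N$ of any fixed interior point of $D$. Condition (2) is what makes the two local contributions at $x_1$ and $x_2$ comparable in this group: it forces the basepoint-changing loops coming from paths inside $N$ to act trivially on the obstruction, so that the contributions really add. Condition (3) then says the two signed contributions are $+1$ and $-1$, so they cancel and the obstruction vanishes.

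Finally, setting $f_1 = g$ on $N$ and $f_1 = f$ elsewhere, a rel-boundary homotopy from $f|_N$ to $g$ extended by the constant homotopy outside $N$ yields the required family $\{f_t\}$, which is constant on $X \setminus N \supset X \setminus D'$ and satisfies $f_1^{-1}(D) = \bigcup_{i>2} D_i$. The main obstacle will be the middle step: making the obstruction-theoretic argument precise and then realizing it by an explicit homotopy. A direct way is to tube $D_1$ and $D_2$ together along a thickening of $\gamma$, using the null-homotopy furnished by (2) to define the map on the connecting tube and the orientation reversal (3) to push the resulting tube map off $D$ without introducing new preimages.
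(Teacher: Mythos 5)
The paper does not prove this lemma itself: it simply refers to Epstein's ``The degree of a map'' \cite[p.378--380]{Eps66}, noting only that the construction of the homotopy is local to $D'$ and therefore carries over unchanged from manifolds to the pseudomanifold setting. Your sketch, in contrast, actually lays out the argument, and it does so in a way that is faithful to Epstein: build a regular $m$-ball $N\subset D'$ around $D_1\cup\gamma\cup D_2$ disjoint from the remaining $D_i$, observe that the problem is a rel-boundary extension problem for $(N,\partial N)\to(Y,Y\setminus D)$, and use (2) and (3) to see that the relevant obstruction vanishes, with the tube-and-cancel construction as the concrete realization. The ``direct way'' you describe at the end is precisely Epstein's construction, so the two approaches coincide in substance; yours just adds an obstruction-theoretic gloss that organizes why (2) and (3) are the correct hypotheses.

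Two small points worth tightening. First, the phrase ``the obstruction lies in a torsor over $\pi_m(Y,Y\setminus D)$'' is not quite right: once a basepoint and orientation are fixed, the obstruction to compressing $f|_N$ rel $\partial N$ into $Y\setminus D$ is an element of $\pi_m(Y,Y\setminus D)\cong\Z$ itself (the local degree of $f$ at $N$); the basepoint ambiguity is resolved exactly by condition (2), after which (3) forces the element to be $+1-1=0$. Second, you should note explicitly that $\pi_1(Y\setminus D)\to\pi_1(Y)$ is an isomorphism (since $D$ is an $m$-ball in the manifold part and $m\geq 3$), which is what makes contractibility of $f(\gamma)$ in $Y$ the right hypothesis rather than contractibility in $Y\setminus D$. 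With those repairs your argument is sound, and it is more informative than the paper's bare citation.
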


We refer the reader to \cite[p.378-380]{Eps66} for a proof of Lemma \ref{lem:Hopf1}, which will work in our setting, as the
construction of the homotopy $f_t$ occurs in $D'$  and, therefore, carries
over to our context. We will now state the corresponding version of Hopf's
Theorem in the orientable case.

\begin{lemma}\label{lem:Hopf2}
Let  $X$ be an admissible orientable connected pseudomanifold and
$(Y, Y_1)$ an orientable relative manifold of the same dimension
$m \geq 3$. Suppose that $f : X \longrightarrow Y$ is a map of
degree $k$ inducing an epimorphism between fundamental groups.

Then there exists a homotopy $\{f_t\}_{0 \leq t \leq 1}$ of $f_0 =
f$ such that $f^{-1}_1(Y \setminus Y_1)$ is homeomorphic to the
disjoint union of $k$ disks and the restriction of $f_1$ to the
union of these disks is a covering map with base space $Y
\setminus Y_1$ and $k$ sheets.
\end{lemma}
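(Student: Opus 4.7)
The plan is to adapt Hopf's classical proof of his degree theorem, the crucial point being that every homotopy and every auxiliary path must be kept in the manifold part $X \setminus \Si(X)$; this is exactly what admissibility is designed to allow.

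First I would choose a regular value $y \in Y \setminus Y_1$ and, after a small preliminary perturbation permitted by the codimension-$2$ bound on $\Si(X)$, arrange that $f$ is transverse to $y$ with $f^{-1}(y) = \{x_1,\ldots,x_n\} \subset X \setminus \Si(X)$. I would then pick a small disk neighbourhood $D$ of $y$ in $Y \setminus Y_1$ so that $f^{-1}(D) = \bigsqcup_{i=1}^n D_i$ is a disjoint union of embedded $m$-disks lying in $X \setminus \Si(X)$, each mapped homeomorphically by $f$ onto $D$. Since $X$ is orientable and $\deg f = k$, the signed count of preimages gives $n = k + 2\ell$ with $\ell \geq 0$, the excess coming from $\ell$ pairs of oppositely oriented disks.

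If $\ell > 0$, pick two disks $D_1, D_2$ of opposite orientation. I would join $x_1$ to $x_2$ by a path $\gamma$ in $X$; admissibility allows $\gamma$ to be taken inside $X \setminus \Si(X)$, and general position (using $m \geq 3$ and $\mathrm{codim}\,\Si(X) \geq 2$) makes it an embedded arc meeting $\bigsqcup_i D_i$ only at its endpoints. The loop $f \circ \gamma$ represents some class $[\omega] \in \pi_1(Y, y)$; since $f_\sharp$ is surjective and admissibility guarantees that every loop of $X$ is homotopic to one inside $X \setminus \Si(X)$, there is a loop $\delta$ at $x_1$ in $X \setminus \Si(X)$ with $f_\sharp[\delta] = [\omega]^{-1}$. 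Replacing $\gamma$ by $\delta \ast \gamma$ renders $f \circ \gamma$ null-homotopic. A tubular neighbourhood of $\gamma \cup D_1 \cup D_2$ inside the open $m$-manifold $X \setminus \Si(X)$ then furnishes an embedded $m$-disk $D'$ satisfying the three hypotheses of Lemma \ref{lem:Hopf1}, which in turn supplies a homotopy of $f$, constant outside $D'$, cancelling the pair $\{D_1, D_2\}$. Iterating $\ell$ times reduces $f^{-1}(D)$ to $k$ coherently oriented disks.

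To globalise from $D$ to all of $Y \setminus Y_1$, I would post-compose with an ambient homotopy $h_t : Y \to Y$ fixing $Y_1$ pointwise and deformation-retracting $Y \setminus Y_1$ onto $D$; transporting this back to $X$ enlarges the remaining $k$ disks into a union $\bigsqcup_{i=1}^{k} \widetilde D_i$ with $f_1^{-1}(Y \setminus Y_1) = \bigsqcup_i \widetilde D_i$ and $f_1$ restricting to a $k$-sheeted covering of $Y \setminus Y_1$. The only real difficulty is the adjustment of $\gamma$ and $\delta$: every auxiliary path and its thickening must be kept off $\Si(X)$, and both their existence and the possibility to thicken $\gamma$ to a disk $D'$ disjoint from the other $D_i$ rely crucially on admissibility together with $m \geq 3$ and the codimension-$2$ bound on $\Si(X)$. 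Once this is secured, the remaining steps are Lemma \ref{lem:Hopf1} and a routine engulfing, exactly as in the smooth case.
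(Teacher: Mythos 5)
Your argument follows the paper's proof essentially step by step: pick a regular value $y$ with $y\notin f(\Sigma(X))$, reduce the number of preimage disks from $n=k+2\ell$ to $k$ by $\ell$ applications of Lemma \ref{lem:Hopf1}, using admissibility to keep the auxiliary arc $\gamma$ and the correcting loop $\delta$ inside $X\setminus\Sigma(X)$, and then globalize from $D$ to all of $Y\setminus Y_1$. The only spot that needs fixing is the description of the final homotopy, which the paper leaves to Epstein but you sketch: a homotopy $h_t$ that deformation-retracts $Y\setminus Y_1$ \emph{onto} $D$ sends all of $Y\setminus Y_1$ into $Y\setminus Y_1$, so $(h_1\circ f)^{-1}(Y\setminus Y_1)=f^{-1}(Y\setminus Y_1)$ is unchanged and nothing is gained. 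What you actually need is the opposite move: an ambient homotopy $h_t:Y\to Y$ with $h_0=\mathrm{id}$, $h_t|_{Y_1}=\mathrm{id}$, and $h_1^{-1}(Y\setminus Y_1)=D^\circ$, i.e.\ $h_1$ expands $D^\circ$ homeomorphically onto $Y\setminus Y_1$ while crushing the collar $Y\setminus (Y_1\cup D^\circ)$ into $Y_1$. Such an $h_t$ exists precisely because $Y\setminus Y_1$ is an open $m$-cell attached to $Y_1$ along its boundary sphere (which is the situation in the paper's application, and is implicitly forced by the conclusion anyway, since a simply connected space covered by disjoint disks must itself be a disk). With that correction the globalization goes through and your proof coincides with the paper's.
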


The degree of $f$ stands here for the absolute value of the
multiple defined by the induced map $f_*: H_m(X; \Z)
\longrightarrow  H_m(Y, Y_1; \Z) $. A corresponding version of
Lemma \ref{lem:Hopf2} also holds in the non-orientable context
with the notion of  absolute degree.

\begin{proof}
Consider a point $y \in Y \setminus Y_1$. We can assume that $ y
\notin f(\Sigma(X)) $ and the map $f$ to be transversal at $y$.
Let  $D \subset Y \setminus Y_1$ be a disk containing the point
$y$ such that $f^{-1}(D) = \cup_{i=1}^n D_i $ is a disjoint union
of $m$-disks embedded in $X \setminus \Sigma(X)$ and such that the
restriction of $f$ to $\cup_{i=1}^n D_i$ is a covering map with
$n$ sheets and base space  $D$. We have $n \geq k$ and suppose
that $n > k$. We can choose generators of $H_m(X; \Z)$ and $H_m(Y,
Y_1; \Z)$ such that the map $f_*$ induced on $m$-dimensional
homology is simply the multiplication by $k$. This induces an
orientation both on  $X$ and $Y \setminus Y_1$, and also on disks
$\{D_i\}_{i=1}^n $ and $D$. As $n > k$, there exists two disks say
$D_1$ and $D_2$  such that  $f|_{D_1}$ reverses the orientation
and $f|_{D_2}$ preserves it. We now follow step by step the proof
of \cite[Theorem 4.1]{Eps66}. Join the two points $x_i = f^{-1}(y)
\cap D_i, \ i = 1, 2$ by a simple curve $\ga \subset X \setminus
\Si(X)$. Because  $f$ induces an epimorphism between fundamental
groups, there exists a loop  $\al$ based at $x_1$ such that
$f(\al)$ and $f(\ga)$ are homotopic as loops based at  $y$. As $X$
is admissible, we can furthermore choose  $\al$ in $X \setminus
\Si(X)$. The  concatenation $\al^{-1}\star \ga$ and
its evident modification in a neighborhood of the
concatenation defines a simple curve $\beta \subset X\setminus
\Si(X) $ joining $x_1$ and $x_2$. The loop $f(\beta)$ is
contractible in  $Y$ relatively to $y$. We then define the disk
$C$ as a small enough neighborhood of $\beta$, and apply Lemma
\ref{lem:Hopf1}. Remark that the choice of $C$ implies a possible
diminution of the size of the disks $\{D_i\}_{i=1}^n $ and $D$.
The end of the proof is straightforward, see \cite{Eps66} for the
missing details.
\end{proof}

\subsection{Admissible geometric cycles are minima of the systolic volume}
In this subsection, we prove Proposition \ref{thm:realisation}. For
this we use in a decisive way the comparison and extension
techniques elaborated in \cite{Bab06}, as well as the ideas
contained in this article. For the reader's convenience, let first recall these two techniques.

\smallskip

We first present the comparison principle which originally appeared as Proposition 3.2 in \cite{Bab06}.
Given two pseudomanifolds $X_1$ and $X_2$ recall that a simplicial map from $X_1$ to $X_2$ is called $m$-monotone if the preimage of any open $m$-simplex is either an open $m$-simplex or empty. 

\begin{proposition}[Comparison Principle]
Let $(X_1,f_1)$ and $(X_2,f_2)$ be two geometric cycles representing an homology class $\a \in H_m(G,\Z)$ and suppose that there exists a $m$-monotone simplicial map $p : X_1 \to X_2$ such that $f_1=f_2 \circ p$. Then
$$
\Sys_{f_1}(X_1) \leq \Sys_{f_2}(X_2).
$$
\end{proposition}

Now we state the extension principle, see Proposition 3.6 in \cite{Bab06}.

\begin{proposition}[Extension Principle]
Let $(X_1,f_1)$  be a geometric cycle representing an homology class of dimension $m$. Suppose that $X_2$ is a CW-complex obtained from $X_1$ by adding a finite number of cells with dimension from $3$ to $m-1$. Then
$$
\Sys_{f_1}(X_1) = \Sys_{f_2}(X_2)
$$
where $f_2:X_2 \to K(G,1)$ is the unique (up to homotopy) extension of $f_1$.
\end{proposition}

We now present the proof of Proposition \ref{thm:realisation}. 

\smallskip

Let $(X_1,f_1)$ be a geometric cycle representing the class $\a$. The pseudomanifold $X_1$ admits a cell decomposition  with only
one  $m$-cell (see for example \cite{Sab06}). This allows us to
describe $X_1$ as a relative $m$-manifold $(Y, Y')$, where $Y$
denotes the  $m$-cell and $Y'$ lies in the $(m-1)$-skeleton.
Following \cite{Bab06} we
construct an extension of  $X_1$ as follows. 

\smallskip
We start by adding a finite number of $1$- and $2$-cells to $X_1$ such
that the resulting CW-complex $X_1(2)$ has fundamental group $G$ and the inclusion map $i : X_1 \hookrightarrow X_1(2)$ satisfies $ i_\sharp=(f_1)_\sharp$. More precisely, given a finitely generated presentation of $G$  we first construct a $2$-dimensional finite cellular complex $K^{(2)}$ by considering the wedge sum of 1-cells corresponding to the generators of the presentation of $G$, and then gluing $2$-cells according to its relations. The finite cellular complex $X(2)$ is then obtained as the wedge sum of $X_1$ and $K^{(2)}$, with additional $2$-cells glued along the image of $(f_1)_\sharp(\gamma_i)\ast \gamma_i^{-1}$ for a finite set $\{\gamma_i\}_{i \in I}$ of generators of $\pi_1(X_1)$. Observe that 
\begin{lemma}
$$
\Sys_{f_1}(X_1)=\Sys(X(2)).
$$
\end{lemma}

\begin{proof}[Proof of the Lemma]
In fact, using the comparison principle, we first observe that 
$$
\Sys_{f_1}(X_1)\leq\Sys(X_1(2)).
$$
Now fix a positive $\e$ and a metric $g_1$ on $X_1$ such that
$$
{\vol(X_1,g_1) \over \sys_{f_1}(X_1,g_1)^m}\leq \Sys_{f_1}(X_1)+\e.
$$
Now define the metric $g$ on $X_1(2)$ which coincides with $g_1$ on $X_1$, for which the length of each additional $1$-cell is exactly $\sys_{f_1}(X_1,g_1)$ and for which each additional $2$-cell is a round hemisphere whose curvature is chosen accordingly to the length of its boundary. It is straightforward to see that any closed curve of $X_1(2)$ can be homotoped to a curve lying in the union of $X_1$ and the $1$-skeleton of  $K^{(2)}$ without increasing the length. In particular $\sys(X_1(2),g)=\sys_{f_1}(X_1,g_1)$. But $\vol(X_1(2),g)=\vol(X_1,g_1)$ for dimensional reasons as $m\geq 3$ so we get 
$$
\Sys(X_1(2))\leq {\vol(X_1(2),g) \over \sys(X_1(2),g)^m}\leq \Sys_{f_1}(X_1)+\e
$$
which concludes the proof.
\end{proof}

\smallskip
 
Then, for each dimension $k$ going from $3$ to $m$, we add
$k$-cells to $X_1(k-1)$ such that the new CW-complex $X_1(k)$ thus
obtained satisfies $\pi_i(X_1(k))=0$ for $1<i<k-1$. At the end we
get a  CW-complex denoted by $X_1(m)$ which is $m$-aspherical and whose
fundamental group is $G$.  Remark that $Y \setminus
Y'$ is an $m$-cell of  $X_1(m)$.
By adding cells of dimension higher than
$m$, we can realize the Eilenberg-MacLane space $K(G,1)$ as an
extension of $X_1(m)$. 

\smallskip

By assumption there exists an admissible pseudomanifold  $X$ and
a map $f : X \to K(G,1)$ giving a realization of the class
$\a \in H_m(G, \Z)$ such that  $f_\sharp : \pi_1(X) \to G$ is an
epimorphism. By the cellular approximation theorem and according to Lemma 3.10 of \cite{Bab06} which applies to this situation, we can find a map 
$$
g : X \to X_1(m)\subset K(G,1)
$$
homotopic to $f$ in $K(G,1)$ such that
\begin{equation}\label{eq:coherence}
g_*[X] = i_*[X_1] 
\end{equation}
in $H_m(X_1(m), \Z)$. As $g$ is homotopic to $f$, the induced map $g_\sharp$ is still an epimorphism and  
$$
\Sys_f(X)=\Sys_g(X).
$$
  Let $\{Y\}\cup \{Y_i\}_{i\in I}$ denote the
$m$-cells of $X_1(m)$ (this list can be finite or infinite). To
each $m$-cell $Y_i$ or $Y$ is associated the relative manifold
$(Y_i, \widehat{Y_i})$ or $(Y, \widehat{Y})$, where
$\widehat{Y_*}$ denotes the closure of the union of all the other
cells of $Y_*$. The map $g$ induces maps
$$
\widetilde{g}: X \to (Y, \widehat{Y}) \ \ \mbox{and} \ \
\widetilde{g_i}: X \to (Y_i, \widehat{Y_i}), \ \ \forall i \in I
$$
which still induce epimorphisms between fundamental groups.
From (\ref{eq:coherence}), we deduce that the degree of
$\widetilde{g}$ is equal to  $1$, and that the degree of each
$\widetilde{g_i}$ is zero. As $X$ is compact, $g(X)$ intersects
only a finite number of  $m$-cells. We then apply  Lemma
\ref{lem:Hopf2} to each relative manifold $(Y_i, \widehat{Y_i})$
such that  $Y_i \bigcap g(X) \neq \emptyset$. In this way we
obtain a map
$$
g_1 : X \to Z \subset X_1(m-1)
$$
homotopic to $g$ where  $Z$ denotes the subcomplex of $X_1(m-1)$
obtained from $X_1(2)$ by adding the cells with
dimension from $3$ to $m-1$ which intersect $g_1(X)$. Observe that these cells are in finite number so 
$$
\Sys(Z)=\Sys(X_1(2))
$$
according to the extension principle. The map can be chosen to be $m$-monotone and simplicial, see  \cite[Proposition 3.13]{Bab06}. Thus
$$
\Sys_g(X)=\Sys_{g_1}(X)\leq \Sys(Z)
$$ 
according to the comparison principle.

\smallskip

To summarize, we have shown that
$$
\Sys_{f}(X) \leq \Sys_{f_1}(X_1)$$
for any representation $(X_1, f_1)$ of $\a$ which proves that
$$
\Sys_{f}(X) = \Sys(G,\a).
$$

\subsection{Manifolds with singularities of prescribed type according to Baas}\label{sec:modelesing}

In order to prove Proposition \ref{prop:representation}, we briefly
recall a type of singular manifolds introduced by N. Baas. 
We follow the presentation of \cite{Baas73}.

\smallskip

First recall the following definition due to J. Cerf, see \cite{Cerf61}.
\begin{definition}
A manifold with general corners of dimension $m$ is a Hausdorff space locally modeled on the $m$-cube $[0,1]^m$.
\end{definition}

In other words, manifolds with general corners are manifolds whose boundary looks like the boundary of  the cube. A manifold with general corners can be smoothed through a process which has been well studied   (see \cite{Cerf61} for instance).

\smallskip

 To define manifolds with reasonable singularities, N. Baas---following D. Sullivan's idea---considers manifolds with general corners whose boundary decomposes as products and contract some terms of these products, like in Example \ref{ex:sing}. To describe his construction let recall the following definition.

\begin{definition}\label{def:decompose} A manifold  $M^m$ with general corners is said to be
  {\it decomposed} if its boundary decomposes into an union 
  $$
\partial M = \partial_0M \cup \partial_1 M \cup ...\cup \partial_n M
$$
of codimension $1$ submanifolds with general corners. Here the union means the identification along a common part of the boundaries of the $\partial_i M$'s.
\end{definition}

If $M$ is a decomposed manifold, each of its boundary terms turns out to be a decomposed manifold too. More precisely, by setting
\begin{equation}\label{eq:decompose}
\left\{\begin{matrix}
\partial_j(\partial_iM) & = & \partial_jM \cap \partial_iM & \mbox{if} \ \ j \neq i, \\
\partial_i(\partial_iM) & = & \emptyset  &  \\
\end{matrix}
\right.
\end{equation}
we get
$$
\partial(\partial_iM) = \mathop{\cup}\limits_{j=0}^n \partial_j(\partial_iM)
$$
and each  $\partial_iM$ is again a decomposed manifold.

\begin{exemple} If  $M$ denotes the $m$-dimensional cube, its boundary is naturally decomposed into $(m-1)$-faces :
$$
\partial M = \partial_0M \cup \partial_1M \cup ...\cup \partial_{2m-1}M.
$$
\end{exemple}

We consider a finite sequence of closed manifolds $S = \{P_0=\ast, P_1,
..., P_n \}$ ordered by increasing dimension. These manifolds will play the role of $_{``}$singularities$_{"}$ after their contraction. Before the contraction process, the following definition describes how the manifolds should look like in order to prescribe the type of singularities we will obtain.

\begin{definition}\label{def:modelesingular}
A manifold $M$ with general corners is said to be an {\it $S$-manifold} if

\noindent 1) For any subset $\omega \subset \{0, 1, ..., n\}$,
there exists a decomposed manifold $M(\omega)$ such that

\noindent (a) $M(\emptyset) = M$,

\noindent (b) $\partial M(\omega) =  \cup_{i \notin \omega}
\partial_i M(\omega)$.

\noindent (c) For every $i \in \{1,\ldots,n\} \setminus \omega$,
there exists a diffeomorphism
\begin{equation}\label{eq:produit}
\beta(\omega, i): \partial_i M(\omega)  \simeq  M(\omega, i) \times P_i , \\
\end{equation}
where $(\omega, i) = \omega \cup \{i\}$,

\noindent 2) For any subset $\omega \subset \{0, 1, ..., n\}$, and
for all $i,j \in \{1,\ldots,n\} \setminus \omega$, the following
diagram is commutative:
\begin{equation*}
\begin{matrix}
\partial_j\partial_iM(\omega)&\stackrel{\beta(\omega, i)}{\longrightarrow}&\partial_jM(\omega, i)\times P_i&
\mathop{\longrightarrow}\limits^{\beta(\omega, i, j)\times id} & M(\omega, i, j)\times P_j\times P_i \\
|| & & & & \\
\partial_jM(\omega)\cap\partial_iM(\omega) & & & & \Bigg{\downarrow} id\times T \\
|| & & & & \\
\partial_i\partial_jM(\omega)&\mathop{\longrightarrow}\limits^{\beta(\omega, j)}&\partial_jM(\omega, i)\times P_j&
\mathop{\longrightarrow}\limits^{\beta(\omega, j, i)\times id} &
M(\omega, i, j)\times P_i\times P_j
\end{matrix}
\end{equation*}
where $T$ denotes the transposition.
\end{definition}

The first part of the definition describes the local structure of
product on the boundary of the decomposed manifold $M$. The
diagram describes how the boundary components are glued together.
We now define the particular class of singular manifolds pointed by N. Baas.

\begin{definition}\label{def:singular}
To any $S$-manifold $M$
we associate the singular manifold 
$M_S$ defined as the quotient space $M(\emptyset)/ \sim$ where
$$
a \sim b
$$
if
$$
a, b \in \partial_{i_1}...\partial_{i_k}M, \ \ i_j \geq 1, \ \ k
\geq 1 ,
$$
and
\begin{equation}\label{eq:equivalence}
\begin{matrix}
\mbox{pr}\circ\beta(i_1,... , i_k)\circ....\circ\beta(i_1, i_2)\circ\beta(\emptyset, i_1)(a) = \\
\mbox{pr}\circ\beta(i_1,... , i_k)\circ....\circ\beta(i_1,
i_2)\circ\beta(\emptyset, i_1)(b).
\end{matrix}
\end{equation}
Here
\begin{equation}\label{eq:project}
\mbox{pr} : M(i_1,..., i_k)\times P_{i_1}\times... \times P_{i_k}
\longrightarrow M(i_1,..., i_k)
\end{equation}
denotes projection on the first factor.
\end{definition}


If the elements of $S$ are connected manifolds, then every
manifold $M$ with singularities of type $S$ is an
admissible pseudomanifold. If not, the following remark will be of
fundamental importance in the next section.

\begin{remark}\label{rem:connexe}  For each  $i =1,\ldots,n$, we decompose the manifold $P_i$ into connected components $Q_{ij}$ and set
$$
T = \{Q_{ij} \mid 1\leq i \leq n, 1 \leq j \leq k_i\}.
$$
Given a manifold $M_S$ with singularities of type
$S$ modeled on $M$, the local $S$-structure (\ref{eq:produit}) on
$\partial M$ defines a  local $T$-structure. The commutative
diagram of Definition \ref{def:modelesingular} and the equivalence
(\ref{eq:equivalence}) allow us to define an equivalence relation
on  $M$ denoted by $\sim_T$ and such that the projections
(\ref{eq:project}) only occur along the factors of type $Q_{ij}$.
This gives rise to a $T$-singular manifold $M_T$ defined as the
quotient $M(\emptyset)/ \sim_T$. A class for the  relation
$\sim_T$ being a subclass of the relation $\sim_S$, we get a
canonical map of degree $1$
\begin{equation}\label{def:applcanon}
q: M_T \to M_S.
\end{equation}
\end{remark}

\subsection{Realization of homology classes by admissible geometric cycles.} We now prove Proposition \ref{prop:representation}. Following Milnor \cite{Miln60} and Novikov \cite{Nov62}, the  complex cobordism ring $\Omega_*^U$ is isomorphic to the ring of integer polynomials $\Omega_*^U = \Z[x_1, x_2, ...]$ where each generator $x_k$ is of degree $2k$ and can be represented by a manifold $P_k$. Each representant $P_k$ can be chosen as a complex algebraic manifold, see  \cite{St68} for instance. But the connectivity of this complex manifold is not clear in general (if $k=p^s-1$ where $p$ is a prime number, $P_k$ can be choosen as ${\C}P^k$).
Define the following sequence of singularities
\begin{equation}\label{eq:Pk}
S = \{P_1, P_2, ...\}.
\end{equation}
According to Baas' Theorem \cite[Corollaire 5.1]{Baas73}, given a homology class $\a \in H_m(X, \Z)$, there exists  a manifold $M_S$ with singularities of type $S$ of
dimension $m$ and a map $f: M_S \to X$ such that $f_*[M_S] = \a$.
The elements in $S$ are not necessarily connected manifolds. So we
proceed as in remark \ref{rem:connexe}, and obtain in this way a
new manifold with singularities $M'$ representing
$\a$ which is now an admissible pseudomanifold. Finally, we add if
necessarily $1$-handles to $M'$ and extend the map $f'=f \circ q$
(where $q$ denotes the canonical map from $M'$ to $M_S$ of degree
$1$) in such a way that $f'_\sharp$ becomes an epimorphism between
fundamental groups. This concludes the proof.

\begin{remark}\label{rem:cobord} The admissible pseudomanifold $M'$
which realizes $\a$ can be chosen as a manifold
with singularities whose singularities are more specific. In
${\C}P^m \times {\C}P^n$ with $m \leq n$, we consider the
hypersurface of degree $(1,1)$
$$
H_{m,n} = \{z_0w_0+z_1w_1+ ... + z_mw_m = 0\},
$$
where $(z_0, z_1,..., z_m)$ and $(w_0, w_1, ..., w_n)$ denote the
homogeneous coordinates in ${\C}P^m$ and ${\C}P^n$ respectively.
The manifolds $H_{m,n}$ are known as  {\it Milnor's manifolds}.
The cobordism  classes of the $\{H_{m,n}\}_{m\leq n}$ together
with the familly  of classes $\{{\C}P^ s\}_{s \geq 1}$ give rise
to a spanning familly of $\Omega_*^U$ (see \cite{Hirz58} and
\cite{Nov62}).  The classes in  $\Omega_*^U$ are thus linear
combinations (with integer coefficients) of cobordim classes
$\{H_{m,n}\}_{m\leq n}$ and  $\{{\C}P^ s\}_{s \geq 1}$. So we can
choose the $\{P_k\}_{k \geq 1}$ as a disjoint union of some of
these manifolds endowed with an adequate orientation. Taking into
account remark \ref{rem:connexe}, an admissible pseudomanifold
which represents the  class $\a \in H_*(K, \Z)$ can be choosen as
a manifold with singularities of type $\{{\C}P^
s, H_{m,n}\}$.
\end{remark}

\section{Relative density of the values of systolic volume}\label{sec:densite}

The aim of this section is to show that the set of values of
systolic volume over the set of homology classes (resp. over the
set of orientable manifolds) of fixed dimension is a relatively
dense set in the following sense.

\begin{definition}
Given a subset $A \subset \R^+$ and a positive number $d$, $A$
is said to be {\it $d$-dense} in $\R^+$ if for any interval $I \subset
\R^+$ of length $|I|>d$, the intersection $I \cap A$ is not empty.
\end{definition}

For a fixed  dimension  $m\geq 3$, define
$$
\Sigma_m:=\{\Sys(G,\a) \mid G \, \text{finitely presentable group
and} \, \a \neq 0 \in H_m(G,\Z)\},
$$
and
$$
\sigma_m :=\underset{(G,\a) \in \Sigma_m}{\inf} \Sys(G,\a).
$$
Similarly, set
$$
\Omega_m^+:=\{\Sys(M) \mid M \, \text{orientable essential
manifold of dimension $m$}\},
$$
and
$$
\omega_m^+ := \underset{M \in \Omega_m^+}{\inf} \Sys(M).
$$
Recall that $\sigma_m>0$ by \cite{Gro83} and that an orientable
manifold $M$ is said {\it essential} if the image of its
fundamental class under its classifying map is not zero. In
particular, $\omega_m^+\geq \sigma_m$. The main result of this
section is the following proposition.

\begin{proposition}\label{th:densiterelative}
For any dimension $m \geq 3$, the set $\Sigma_m$ (resp.
$\Omega_m^+$) is $\sigma_m$-dense (resp. $\omega_m^+$-dense) in
$\R^+$.
\end{proposition}

Proposition \ref{prop:densintro} of the introduction is a direct consequence of this statement.
In order to prove Proposition \ref{th:densiterelative}, we will study the behaviour of
systolic volume under the operation of connected sum. Part of
these results will also be useful in the next section.

\begin{remark} Remark that the set $\Omega_m$ of values taken by the systolic volume over all manifolds of the same dimension $m$ (not necessarily orientable) contains the subset $\Omega_m^+$ : it is also a relatively dense set of  $\R^+$ with density $\omega_m^+$. It is not clear if this density can be decreased and the answer may depend on the parity of the dimension.
\end{remark}

Fix a morphism of groups $\pi : G \to G'$ and a
homology class $\a \in H_m(G,\Z)$.

\begin{proposition}\label{prop:modif1}
If $m\geq 3$,
$$
\Sys(G',\pi_\ast \a)\leq \Sys(G,\a).
$$
\end{proposition}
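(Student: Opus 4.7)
The plan is to exploit naturality of the systolic volume under composition with the classifying map of a group morphism. Any morphism $\pi : G \to G'$ induces a continuous map $K(\pi) : K(G,1) \to K(G',1)$ (unique up to homotopy) whose effect on $H_m$ coincides with $\pi_\ast$ under the standard identification $H_m(K(G,1),\Z)=H_m(G,\Z)$.

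First, I would observe that any geometric cycle $(X,f)$ representing $\a$ gives rise to a geometric cycle representing $\pi_\ast\a$, namely $(X, K(\pi)\circ f)$: indeed,
$$
(K(\pi)\circ f)_\ast[X] = K(\pi)_\ast(\a) = \pi_\ast\a.
$$
Crucially, the underlying pseudomanifold $X$ and the set of polyhedral metrics on it are the same, so $\vol(X,g)$ is identical for both cycles for every polyhedral metric $g$.

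Next I would compare the two relative systoles. A loop $\gamma\subset X$ contributes to $\sys_{K(\pi)\circ f}(X,g)$ precisely when $(K(\pi)\circ f)_\sharp([\gamma]) = \pi(f_\sharp([\gamma]))$ is nontrivial in $G'$. Every such $\gamma$ automatically has $f_\sharp([\gamma])\neq 0$ in $G$, hence contributes to $\sys_f(X,g)$ as well. Thus the infimum defining $\sys_{K(\pi)\circ f}(X,g)$ is taken over a (possibly) smaller family of loops, giving
$$
\sys_{K(\pi)\circ f}(X,g) \geq \sys_f(X,g).
$$
Combined with equality of volumes, this yields $\Sys_{K(\pi)\circ f}(X) \leq \Sys_f(X)$ after taking the infimum over $g$.

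Finally, since $(X, K(\pi)\circ f)$ is an admissible candidate in the infimum defining $\Sys(G',\pi_\ast\a)$, we get
$$
\Sys(G',\pi_\ast\a) \leq \Sys_{K(\pi)\circ f}(X) \leq \Sys_f(X),
$$
and taking the infimum over all geometric cycles $(X,f)$ representing $\a$ gives the desired inequality $\Sys(G',\pi_\ast\a)\leq\Sys(G,\a)$. There is no serious obstacle: the argument is pure naturality, and the hypothesis $m\geq 3$ is not even needed for this monotonicity step (it is presumably in force for consistency with the surrounding results on connected sums).
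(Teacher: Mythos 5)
Your proof is correct, and it takes a cleaner and more elementary route than the paper's. The paper fixes a \emph{normal admissible} representation $(X,f)$ of $\a$ via Proposition \ref{prop:representation}, then invokes the full strength of Theorem \ref{thm:realisation} (which is where $m\geq 3$ enters) to write $\Sys(G,\a)=\Sys_f(X)$ and $\Sys(G',\pi_\ast\a)=\Sys_{\pi\circ f}(X)$, and sandwiches these with the systole comparison $\Sys_{\pi\circ f}(X)\leq\Sys_f(X)$. You bypass the realization machinery entirely: you make the systole comparison $\sys_{K(\pi)\circ f}(X,g)\geq\sys_f(X,g)$ explicit for an \emph{arbitrary} geometric cycle $(X,f)$ and arbitrary metric $g$, and then pass to the infimum over both $g$ and $(X,f)$. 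The pivotal observation (a loop that is non-contractible after pushing forward by $\pi$ was already non-contractible before) is the same in both proofs, but you isolate it as the whole argument. Your version has two genuine advantages. First, as you note, it does not use $m\geq 3$, which is only needed in the paper because it routes through Theorem \ref{thm:realisation}. Second, the paper asserts that $(X,\pi\circ f)$ is a \emph{normal} representation of $\pi_\ast\a$, which requires $\pi$ to be surjective, whereas the proposition's hypothesis only assumes $\pi$ is a morphism; your argument never needs normality, so it is free of this issue. (In both proofs one implicitly assumes $\pi_\ast\a\neq 0$ so that $\Sys(G',\pi_\ast\a)$ is defined; this is automatic in the paper's applications.)
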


\begin{proof} According to Proposition \ref{prop:representation}, fix an admissible geometric cycle $(X, f)$ representing normally the class $\a$. The admissible geometric cycle $(X, \pi \circ f)$ is a normal representation of the class $\pi_{\ast}(\a)$, so
$$
\Sys(G', \pi_{\ast}(\a))= \Sys_{\pi \circ f}(X) \leq  \Sys_f(X) =
\Sys(G, \a)
$$
by Proposition \ref{thm:realisation}.
\end{proof}

\begin{corollary}\label{cor:somme}
Let $X_1$ and $X_2$ be two orientable admissible pseudomanifolds
of dimension $m \geq 3$. Then
$$
\max\{\Sys(X_1), \Sys(X_2)\} \leq \Sys(X_1 \# X_2),
$$
where $X_1\#X_2$ denotes the connected sum of  $X_1$ and $X_2$.
\end{corollary}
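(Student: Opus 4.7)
The plan is to deduce the corollary directly from Proposition \ref{prop:modif1} together with Theorem \ref{thm:realisation}, by exhibiting $X_1$ (and symmetrically $X_2$) as the admissible representative of a homology class obtained from that of $X_1\#X_2$ via a natural group quotient.

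First I would verify that the connected sum $X:=X_1\#X_2$ is itself an orientable admissible pseudomanifold. Since $m\geq 3$, the connected sum is formed by excising small $m$--disks situated in the regular loci $X_i\setminus\Si(X_i)$ and identifying along the resulting $(m-1)$-spheres. The singular locus of $X$ is the disjoint union $\Si(X_1)\sqcup\Si(X_2)$, and by Seifert--van Kampen, $\pi_1(X)=\pi_1(X_1)\ast\pi_1(X_2)=:G$. Admissibility of $X$ follows because every element of $G$ is a word in the factors, each letter of which can be represented by a loop in $X_i\setminus\Si(X_i)$ by admissibility of $X_i$; the concatenation is then a loop in $X\setminus\Si(X)$.

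Next I would consider, for $i=1,2$, the natural collapse map $c_i:X\to X_i$ which crushes the complement of the excised disk in $X_{3-i}$ to a point. This map has degree $1$ (it restricts to the identity on a top-dimensional ball of $X_i$), and induces on fundamental groups the canonical projection $\pi_i:G=\pi_1(X_1)\ast\pi_1(X_2)\to\pi_1(X_i)$. Let $f:X\to K(G,1)$ and $f_i:X_i\to K(\pi_1(X_i),1)$ be the classifying maps, and set $\a:=f_\ast[X]\in H_m(G,\Z)$. Up to homotopy, $K(\pi_i,1)\circ f\simeq f_i\circ c_i$, hence
$$
(\pi_i)_\ast\a=(\pi_i)_\ast f_\ast[X]=(f_i)_\ast (c_i)_\ast[X]=(f_i)_\ast[X_i].
$$
In other words, $(\pi_i)_\ast\a$ is precisely the homology class in $H_m(\pi_1(X_i),\Z)$ whose systolic volume equals $\Sys(X_i)$.

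Finally, I would apply Proposition \ref{prop:modif1} with morphism $\pi_i:G\to\pi_1(X_i)$ and class $\a$. Since $X$ and each $X_i$ are admissible and are endowed with their classifying maps (which are normal representations, $f_\sharp$ and $(f_i)_\sharp$ being isomorphisms), Theorem \ref{thm:realisation} identifies $\Sys(G,\a)=\Sys(X)$ and $\Sys(\pi_1(X_i),(\pi_i)_\ast\a)=\Sys(X_i)$. Proposition \ref{prop:modif1} then yields
$$
\Sys(X_i)=\Sys(\pi_1(X_i),(\pi_i)_\ast\a)\leq\Sys(G,\a)=\Sys(X_1\#X_2),
$$
for $i=1,2$, which is exactly the claimed inequality. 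There is no real obstacle here; the only step requiring a bit of care is the identification $(\pi_i)_\ast\a=(f_i)_\ast[X_i]$, which hinges on the degree--one collapse map and the fact that the dimension condition $m\geq 3$ makes both the van Kampen computation and the applicability of Theorem \ref{thm:realisation} work uniformly.
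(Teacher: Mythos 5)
Your proof is correct and follows essentially the same route as the paper: both arguments reduce the claim to Proposition \ref{prop:modif1} applied to the canonical projection $\pi_1(X_1)\ast\pi_1(X_2)\to\pi_1(X_i)$, after identifying $\Sys(X_1\#X_2)$ with $\Sys(\pi_1(X_1)\ast\pi_1(X_2),\alpha)$ via Theorem \ref{thm:realisation}. The only cosmetic difference is that you compute $(\pi_i)_\ast\alpha=(f_i)_\ast[X_i]$ geometrically through the degree--one collapse map $c_i$, whereas the paper computes it algebraically from $s_j\circ i_j=\mathrm{id}$, $s_j\circ i_{3-j}=0$ and the splitting $H_m(G_1\ast G_2,\Z)\simeq H_m(G_1,\Z)\oplus H_m(G_2,\Z)$; you also spell out the (tacitly assumed in the paper) admissibility of $X_1\#X_2$, which is a welcome bit of extra care but not a new idea.
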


\begin{proof}
Denote by  $f_j : X_j \to K(\pi_1(X_j),1)$ the classifying map for
$j=1,2$. Observe that $\pi_1(X_1\#X_2)=\pi_1(X_1) \ast
\pi_1(X_2)$. We have a natural monomorphism $i_j :
\pi_1(X_j)\hookrightarrow \pi_1(X_1) \ast \pi_1(X_2)$ and a
natural epimorphism $s_j : \pi_1(X_1) \ast \pi_1(X_2) \to
\pi_1(X_j)$ such that $s_j \circ i_j=id_{\pi_1(X_j)}$, $s_2 \circ
i_1=0$ and $s_1 \circ i_2=0$. By Proposition \ref{prop:modif1},
\begin{eqnarray}
\nonumber
\Sys(X_1\#X_2)&=&\Sys(\pi_1(X_1) \ast \pi_1(X_2),(i_1\circ f_1)_\ast[X_1]+(i_2\circ f_2)_\ast[X_2])\\
\nonumber
&\geq&\Sys(s_j(\pi_1(X_1) \ast \pi_1(X_2)),(s_j)_\ast\circ ((i_1\circ f_1)_\ast[X_1]+(i_2\circ f_2)_\ast[X_2])) \\
\nonumber
&\geq&\Sys(\pi_1(X_j),(f_j)_\ast[X_j])\}=\Sys(X_j)\\
\nonumber
\end{eqnarray}
for $j=1,2$.
\end{proof}

Furthermore, we have the following comparison result:

\begin{proposition}\label{prop:sommeconnexe1}
Let $X_1$ and $X_2$ be two orientable pseudomanifolds of
dimension $m \geq 3$. Then
$$
\Sys(X_1 \# X_2)\leq \Sys(X_1)+ \Sys(X_2).
$$
\end{proposition}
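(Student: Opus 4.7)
The strategy is the classical one of constructing a near-optimal metric on $X_1 \# X_2$ by gluing near-optimal metrics on the two factors along a small sphere, then observing that the systole is essentially preserved while the volume is essentially additive.

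Fix $\e > 0$. For $i=1,2$, choose a polyhedral metric $g_i$ on $X_i$ with $\sys_{f_i}(X_i,g_i) = 1$ and $\vol(X_i,g_i) \leq \Sys(X_i) + \e/2$, where $f_i : X_i \to K(\pi_1(X_i),1)$ denotes the classifying map. Since $\Si(X_i)$ has codimension at least $2$ and $m \geq 3$, pick non-singular points $p_i \in X_i \setminus \Si(X_i)$. After a polyhedral refinement of $g_i$ supported in an arbitrarily small neighborhood of $p_i$, which raises the volume by at most $\e/2$ and does not affect the systole, arrange that $p_i$ admits a neighborhood isometric to a round Euclidean ball $B_i$ of radius $r>0$, with $r$ small and to be chosen later. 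Remove the open balls $B_i$ and glue the boundary spheres $\partial B_1 \simeq \partial B_2 \simeq S^{m-1}$ via an orientation-reversing isometry, producing a polyhedral metric $g$ on $X_1 \# X_2$; denote the common gluing sphere by $S$.

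The volume estimate $\vol(X_1 \# X_2, g) \leq \vol(X_1,g_1) + \vol(X_2,g_2) \leq \Sys(X_1) + \Sys(X_2) + \e$ is immediate. For the systole, the key input is van Kampen's theorem: since $m \geq 3$, one has $\pi_1(X_1 \# X_2) \cong \pi_1(X_1) \ast \pi_1(X_2)$. Any non-contractible loop $\ga$, after a small perturbation to be transverse to $S$, decomposes into arcs $\ga_1,\ldots,\ga_N$, each lying in a single side $X_{i_k} \setminus B_{i_k}$ and with endpoints on $S$. Closing each arc $\ga_k$ by a shortest path on $S$ (of length at most $\pi r$, unambiguous in homotopy since $S$ is simply connected) yields a loop $\widetilde{\ga_k}$ in $X_{i_k}$, and the class $[\ga]$ corresponds to the word $[\widetilde{\ga_1}] \cdots [\widetilde{\ga_N}]$ in $\pi_1(X_1) \ast \pi_1(X_2)$. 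Since $[\ga] \neq 1$, at least one $[\widetilde{\ga_k}]$ is non-trivial in $\pi_1(X_{i_k})$, hence of length at least $1$, forcing $\mbox{length}(\ga) \geq \mbox{length}(\ga_k) \geq 1 - \pi r$. Therefore
$$
\Sys(X_1 \# X_2) \leq \frac{\Sys(X_1) + \Sys(X_2) + \e}{(1 - \pi r)^m},
$$
and letting $r \to 0$ and then $\e \to 0$ concludes.

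The main obstacle lies in the local metric surgery: inserting an isometric round Euclidean ball around a chosen non-singular point of a polyhedral pseudomanifold, while controlling the volume increase and preserving the systole. This is precisely the kind of comparison-and-extension operation used in \cite{Bab06, Bab08} and must be carried out carefully in the polyhedral pseudomanifold setting. A secondary but routine point is the transversality of $\ga$ with respect to $S$ in the polyhedral category, handled by a standard general-position argument.
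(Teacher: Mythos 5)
Your proof is correct, but it takes a genuinely different route from the paper's. The paper's argument is a one-liner: it contracts the gluing sphere to a point, producing a map $p : X_1 \# X_2 \to X_1 \vee X_2$ which induces an isomorphism on $\pi_1$ for $m \geq 3$, and then invokes the comparison principle of \cite{Bab06} to obtain $\Sys(X_1 \# X_2) \leq \Sys(X_1 \vee X_2) = \Sys(X_1) + \Sys(X_2)$. Your construction is essentially what one would unpack the comparison principle into in this particular case: take near-optimal polyhedral metrics on the two factors, open small round holes at non-singular points, glue along the boundary spheres, and then estimate volume (clearly additive up to $\e$) and systole (via the van Kampen free-product identification $\pi_1(X_1 \# X_2) \cong \pi_1(X_1) \ast \pi_1(X_2)$ together with a cut-and-close argument along the gluing sphere, which works precisely because $S^{m-1}$ is simply connected when $m \geq 3$). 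The trade-off: your approach is self-contained and elementary, while the paper's is shorter and isolates the topological mechanism — a $\pi_1$-isomorphism between models — at the cost of using the comparison principle as a black box. The one place your write-up is under-specified is the local metric surgery at $p_i$: to keep the systole from dropping, the modified metric should dominate $g_i$ pointwise near $p_i$ (so closed geodesic lengths can only increase there), after which the volume excess is controlled by shrinking the ball radius $r$; and the round Euclidean patch should be smoothed into the ambient polyhedral metric so that the result remains polyhedral, e.g.\ by a fine triangulation of the annular transition region. You flag this as the main obstacle, and it is indeed standard, but it is also exactly the content that the comparison-and-extension machinery of \cite{Bab06} is designed to encapsulate once and for all.
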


\begin{proof} The contraction of the gluing sphere into a point gives rise to a natural projection  map
$$
p : X_1 \# X_2 \to X_1 \vee X_2
$$
which induces an isomorphism between fundamental groups if $m \geq
3$. Applying the comparison principle (see \cite[Proposition 3.2]{Bab06}), we get
$$
\Sys(X_1 \# X_2) \leq \Sys(X_1 \vee  X_2) = \Sys(X_1)+\Sys(X_2).
$$
\end{proof}

With this two comparison results, we can now prove Proposition \ref{th:densiterelative}. \\

We first prove the  $\sigma_m$-density of the set $\Sigma_m$.
Consider any interval $I \subset \R^+$ such that  $|I| >
\sigma_m$. Choose a pair $(G,\a)$ composed of a finitely
presentable group and an $m$-class of its homology such that
$\sigma_m \leq \Sys(G,\a) < |I|$. According to Proposition
\ref{prop:representation}, we can choose an admissible geometric
cycle  $(X,f)$ representing normally the class $\a \in H_m(G,\Z)$
which satisfies $\Sys_f(X)=\Sys(G,\a)$ by Proposition
\ref{thm:realisation}. If the map $f_\#$ is not a monomorphism, we
can directly contract some loops in $X$ and get a new admissible
geometric cycle $(X',f')$ representing $\a$ such that $f'_\#$ is
an  isomorphism. So
$$
\Sys_f(X)=\Sys(X').
$$
The sequence $\{a_n = \Sys(\#_n X')\}_{n = 1}^{\infty}$ is
increasing by Corollary \ref{cor:somme} and satisfies $a_{n+1} -
a_n \leq \Sys(X') < |I|$  by Proposition \ref{prop:sommeconnexe1}.
By \cite{Sab07}, we have
$$
\Sys(\#_n X) \geq C_m \frac{n}{\exp(C'_m \sqrt{\log \, n})}
$$
where  $C_m$ and $C'_m$ are two positive numbers depending only
on the dimension $m$ (strictly speaking, this inequality is proved
for manifolds, but carries over perfectly to pseudomanifolds).
So the sequence $\{a_n\}$ is not bounded and does intersect $I$. \\

The proof of the  $\omega_m^+$-density of  $\Omega_m^+$ is
similar: for any interval $I \subset \R^+$ with $|I| >
\omega_m^+$, we can argue as previously starting with the sequence
$\{a_n = \Sys(\#_n M)\}_{n = 1}^{\infty}$ where $M$ is an
orientable essential manifold of dimension $m$ with  $\omega_m^+
\leq \Sys(M) < |I|$.

\section{Non-finiteness of irreducible homology classes}\label{sec:finitude}

This section deals with the following natural question:

\begin{question} Given a positive number $T$, how many are there finitely presentable groups $G$, such that any essential (orientable) manifold $M$ of dimension $m$ with fundamental group $G$ satisfies $\Sys(M) \leq T$ ?
\end{question}

In dimension $2$, this number is bounded from below by $c \cdot T
\, (\ln T)^2$ and from above by $C \cdot T \, (\ln T)^2$ for some
positive constants $c$ and $C$, see \cite{BS94} and
\cite{Gro96}. In fact the situation is quite rigid in dimension
$2$. Even the finiteness of the systolic volume over the set of
finite simplicial complexes of dimension $2$ holds. More
precisely, recall that
\begin{itemize}
\item the {\it systolic area} of a finitely presentable group $G$ is defined as
$$
\Sys(G) = \mathop{\inf}\limits_{\pi_1(P)=G} \Sys(P),
$$
where the infimum is taken over all finite simplicial complexes
$P$ of dimension $2$ with fundamental group $G$ ;
\item a finitely presentable group $G$ is said {\it of zero free index} if $G$ can not be written as a free product $H \ast F_n$ for some $n > 0$, compare \cite{RS08}.
\end{itemize}
Then the number of finitely presentable groups  $G$ of zero  free index such that $\Sys(G) \leq T$ does not exceed  $K^{T^3}$ where $K>1$ is an explicit constant, see \cite{RS08}.\\

The situation is rather different in higher dimensions. Let $M$ be
an essential manifold of dimension $m \geq 4$ whose fundamental
group is of zero  free index and $N$ a non-essential
manifold of the same dimension with  fundamental group of zero
 free index. The fundamental group $\pi_1(M) \ast \pi_1(N)$
of the connected sum $M \# N$ is still of zero  free index.
By Proposition \ref{prop:sommeconnexe1} we get
$$
\Sys(M \# N) \leq \Sys(M).
$$
(Recall that non-essential orientable manifolds have systolic volume equal to zero, see \cite{Bab92} for instance).
That is, while staying in the class of groups of zero  free
index we can considerably modify the fundamental group of a
manifold without increasing the systolic volume. So there is no
hope to obtain finiteness results in this context. This is why we
introduce the following.

\begin{definition}
Let $G$ be a finitely presentable group. A class $\a \in
H_m(G,\Z)$ is said to be reducible if there exists a proper subgroup $H
< G$ and a class $\b \in H_m(H, \Z)$ such that $i_{\ast}(\b)
= \a$ where $i$ denotes the canonical inclusion. In the contrary
case, the class $\a$ will be said irreducible.

Furthermore, we will say that a manifold $M$ is reducible (resp.
irreducible) if the image of its fundamental class $[M]$ (under
the classifying map) in $H_m(\pi_1(M), \Z)$ is a reducible (resp.
irreducible) class.
\end{definition}

\begin{exemple} Let $M$ be an aspherical manifold of dimension $m$ (that is $\pi_k(M) = 0$ for $k>1$). Then $M$ is irreducible.

On the other hand if $M$ and $N$ are non simply connected manifolds and $N$ is not essential then the connected sum $M \# N$ is 
always reducible.
\end{exemple}

\begin{exemple}
Let $G$ be a finite group and  $\a \in H_m(G, \Z)$ be a class of
order $|G|$. Then $\a$ is irreducible.
\end{exemple}

This last example shows that the fundamental class of a lens
manifold is irreducible.

Remark that it is possible that
\begin{itemize}
\item any multiple of an irreducible classe is irreducible as in the case $G= \Z_p$ for $p$ a prime number,
\item each multiple of an irreducible classe is reducible as in the case of tori $\T^m$.
\end{itemize}

On the other hand, there exist groups $G$ and classes $\a \in H_m(G,\Z)$ which are  {\it completely reducible} in the following sense: $\a$ is reducible, and any class $\b \in H_m(H, \Z)$ where $H$ is a proper subgroup $H \subset G$ and such that $i_{\ast}(\b) = \a$ is also reducible.\\

Given a positive number $T$ and a positive integer $m$, we
denote by $\F(m,T)$ the set of finitely presentable groups $G$
such that there exists an irreducible class  $\a \in H_m(G, \Z)$
with $\Sys(\a) \leq T$. Theorem \ref{th:nonfinitudeintro} of the introduction can be now restated as follows.

\begin{theorem}
The set $\F(m,1)$ is infinite for any dimension $m\geq 3$.
\end{theorem}

\begin{proof}
Let $p$ be a prime number and set
$$
G(p,m) := \underset{m}{\underbrace{\Z_p \oplus \ldots
\oplus\Z_p}}.
$$
Denote by  $\phi_p: \pi_1(\T^m) \longrightarrow G(p,m)$ the
natural projection and set
$$
\a(p,m) := (\phi_p)_{\ast}[\T^m] \in H_m(G(p,m), \Z).
$$

In order to prove that $\a(p,m) \neq 0$ in $H_m(G(p,m), \Z)$, we
will show that the reduction modulo $p$ of $\a(p,m)$ is not null
in $H_m(G(p,m), \Z_p)$. Consider the generators $v_1, \ldots, v_m$
of $H^1(G(p, m), \Z_p)$ corresponding to the natural projections
of $G(p, m)$ on each factor. The elements $u_i :=
(\phi_p)^{\ast}(v_i), \ \ i = 1, 2, ..., m$ generate the group
$H^1(\T^m, \Z_p)$, and $u := u_1 \cup  \ldots \cup u_m$ generates
the group $H^m(\T^m, \Z_p)$. So $u \cap [\T^m]_p = 1$ where
$[\T^m]_p$ denotes the reduction modulo $p$ of the fundamental
class $[\T^m]$. This implies
\begin{eqnarray}
\nonumber (v_1 \cup \ldots\cup v_m) \cap (\phi_p)_\ast[\T^m]_p & =
&
(\phi_p)^{\ast}(v_1) \cup \ldots \cup (\phi_p)^{\ast}(v_m) \cap [\T^m]_p\\
\nonumber & =& 1.
\end{eqnarray}
This proves the non-triviality of $(\phi_p)_{\ast}[\T^m]_p$, and thus of $(\phi_p)_{\ast}[\T^m]$.\\

We now prove the irreducibility of $\a(p,m)$. Let suppose the
contrary. Any proper subgroup $H$ of $G(p, m)$ is also a
$\Z_p$-vector subspace  of  $G(p, m)$ of dimension $k < m$.
Associated to some complement of  $H$ in $G(p, m)$, we
construct a projection map
$$
\pi: G(p, m) \longrightarrow H
$$
which is the identity on $H$. Fix a basis of the  free $\Z$-module
$\pi_1(\T^m)$ such that the composition
$$
\pi \circ \phi_p: \pi_1(\T^m) \longrightarrow H
$$
decomposes as
$$
\pi \circ \phi_p = \psi \circ \rho_\sharp,
$$
where $\rho_\sharp$ is induced by some projection $\rho : \T^m
\longrightarrow \T^k$ and $\psi : \pi_1(\T^k) \longrightarrow H$
corresponds to the reduction modulo $p$. Now assume that $\a(p,m)
= i_{\ast}(\b)$, where $\b \in H_m(H, \Z)$ and $i : H
\longrightarrow G(p, m)$ denotes the inclusion. Then
$$
\b = \pi_{\ast}(\a(p,m)) = \pi_{\ast}\circ (\phi_p)_{\ast}[\T^m] =
\psi_\ast \circ \rho_{\ast}[\T^m] = \psi_\ast(0) = 0
$$
as $\b = \pi_{\ast} \circ i_{\ast}(\b)$.
This gives a contradiction the class $\a(p,m)$ being non trivial.\\

In order to conclude the proof, remark that
$$
\Sys(\a(p,m)) = \Sys_{\phi_p}(\T^m) \leq \Sys(\T^m) \leq 1,
$$
for any $m \geq 3$ and any prime $p$.
\end{proof}

This theorem implies the following unexpected result in dimensions
$m \geq 4$ .

\begin{corollary}
For any dimension $m \geq 4$, there exists an infinite number of
irreducible orientable manifolds $M$ of dimension $m$ with
pairewise non-isomorphic fundamental groups such that $\Sys(M)
\leq 1$.
\end{corollary}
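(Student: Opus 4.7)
The idea is to realize each irreducible class $\a(p,m) \in H_m(G(p,m),\Z)$ from the previous theorem as the image of the fundamental class of a closed orientable $m$-manifold $M_p$. Since $|G(p,m)| = p^m$, the groups $\{G(p,m)\}_p$ are pairwise non-isomorphic as $p$ ranges over the primes, so the family $\{M_p\}$ will have pairwise non-isomorphic fundamental groups and provide the required infinite family.

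To produce $M_p$, I would perform surgery on $\T^m$ to kill the kernel of $(\phi_p)_\sharp : \Z^m \to \Z_p^m$. The generators $pe_1, \ldots, pe_m$ of this kernel can be represented by $m$ disjoint framed embedded circles $\gamma_1, \ldots, \gamma_m \subset \T^m$, using the parallelizability of $\T^m$ and the codimension $m-1 \geq 3$ to achieve disjointness. Attaching a 2-handle along each $\gamma_i$ yields a closed orientable $m$-manifold $M_p$; because the glued-in piece $D^2 \times S^{m-2}$ is simply connected (as $m - 2 \geq 2$), van Kampen's theorem gives $\pi_1(M_p) \cong \Z^m / \langle pe_1,\ldots,pe_m\rangle = G(p,m)$. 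The assumption $m \geq 4$ is crucial here: in dimension $3$ the glued-in piece is a solid torus and van Kampen would no longer yield the desired group, consistent with the fact that $\Z_p^m$ is generally not the fundamental group of a closed $3$-manifold.

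To identify $c_\ast[M_p]$, where $c: M_p \to K(G(p,m), 1)$ is the classifying map, view the construction as a cobordism $W$ from $\T^m$ to $M_p$ obtained by attaching 2-handles to $\T^m \times [0,1]$ along the $\gamma_i \times \{1\}$. Each $\gamma_i$ becomes null-homotopic in $K(G(p,m), 1)$ under $\phi_p$, so $\phi_p$ extends across each 2-handle, and then across all of $W$ by asphericity of $K(G(p,m),1)$. The identity $[\partial W] = [M_p] - [\T^m]$ in singular homology then forces $c_\ast[M_p] = (\phi_p)_\ast[\T^m] = \a(p,m)$, and hence $M_p$ is irreducible by the previous theorem. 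Finally, Theorem \ref{th:realisationintro} applied to the admissible normal representation $(M_p, c)$ gives
\[
\Sys(M_p) = \Sys(G(p,m), \a(p,m)) \leq \Sys_{\phi_p}(\T^m) \leq \Sys(\T^m) \leq 1.
\]
The principal technical hurdle is the homological identification of $c_\ast[M_p]$ under surgery, which is handled here by the cobordism/asphericity argument above.
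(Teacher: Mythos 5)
Your proof is correct and follows the same route as the paper: realize each irreducible class $\a(p,m)$ by a closed orientable $m$-manifold with fundamental group $G(p,m)$ via surgery on $\T^m$, then invoke Theorem~\ref{th:realisationintro} together with the bound $\Sys_{\phi_p}(\T^m)\leq\Sys(\T^m)\leq1$ from the preceding theorem. The paper's proof is just a one-line appeal to \cite{Bab06} for the surgery modification, while you have spelled out that step explicitly (framed disjoint circles representing $pe_1,\ldots,pe_m$, van~Kampen with the simply connected piece $D^2\times S^{m-2}$, and the trace-of-surgery cobordism together with asphericity of $K(G(p,m),1)$ to carry the homology class across). This is precisely the standard argument the citation is pointing to, so the two proofs are essentially identical in substance.
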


\begin{proof}
By construction, every class $\a(p,m)$ is representable by a
manifold. If $m \geq 4$, such a manifold can be modified by surgery
in order to get a new manifold denoted by $M(p, m)$ such that
$\pi_1(M(p, m)) = G(p, m)$ and $\Phi_{\ast}[M(p, m)] = \a(p, m)$,
where $\Phi: M(p, m) \longrightarrow K(G(p, m),1)$ denotes the
classifying map, see \cite{Bab06}. The infinite sequence of
manifolds $\{M(p, m)\}$ where $p$ runs over all prime numbers
gives an infinite sequence of irreducible orientable manifolds $M$
of dimension $m$ with pairewise non-isomorphic fundamental groups
such that $\Sys(M) \leq 1$.
\end{proof}


The following natural question remains open :

\begin{question} Consider the systolic volume $\Sys(\cdot)$
as a function over all irreducible orientable manifolds of dimension $m$.
Does there exist a positive number $C$ such that the number
of distinct values of the function $\Sys(\cdot)$ less than $C$ is infinite ?
\end{question}

\section{Systolic volume of multiple classes}\label{sec:multiple}

Given a finitely presentable group $G$ and a homology class $\a \in H_\ast(G,\Z)$, how does the function  $\Sys(G,k\a)$ behave  in
term of the integer variable $k$ ? In this section, we prove Theorem \ref{th:multipleintro} which we restate here for the reader's convenience.
\begin{theorem}\label{th:multiple}
Let  $G$ be a finitely presentable group and  $\a \in H_m(G, \Z)$
where $m\geq 3$. Then there exists a positive number $C(G,\a)$
depending only on the pair $(G,\a)$ such that
\begin{equation}\label{eq:bornesup}
\Sys(G,k\a) \leq C(G,\a) \cdot \frac{k}{\ln(1 +k)}
\end{equation}
for any positive integer $k$.
\end{theorem}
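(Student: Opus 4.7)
My plan is to construct, for each integer $k \geq 2$, an admissible geometric cycle $(Y_k, F_k)$ representing the class $k\a$ whose systolic volume is bounded above by $C(G,\a) \cdot k/\log(1+k)$; the case $k=1$ holds trivially by choosing $C(G,\a)$ large enough.

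First I fix the data. Applying Proposition~\ref{prop:representation}, I obtain an admissible normal geometric cycle $(X_0, f_0)$ representing $\a$, and by Theorem~\ref{thm:realisation} it satisfies $\Sys_{f_0}(X_0) = \Sys(G, \a)$. I then choose a polyhedral metric $g_0$ on $X_0$ whose systolic ratio is within a factor $2$ of $\Sys(G,\a)$, normalized so that $\sys_{f_0}(X_0, g_0) = 1$.

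The construction of $(Y_k, F_k)$ is the heart of the proof. I glue together $k$ copies of $X_0$ (with metric $g_0$, suitably rescaled) along a carefully designed combinatorial skeleton via connected-sum-type attachments that preserve the admissibility of the resulting pseudomanifold. The map $F_k$ is built piecewise from $f_0$ on each copy together with a suitable extension over the skeleton, arranged so that each copy contributes one summand $\a$ to $(F_k)_*[Y_k] = k\a$. The metric $h_k$ on $Y_k$ is chosen so that
\begin{equation*}
\vol(Y_k, h_k) \leq C_1 \, k \, \vol(X_0, g_0) \quad \text{and} \quad \sys_{F_k}(Y_k, h_k) \geq c \, (\log(1+k))^{1/m},
\end{equation*}
for suitable positive constants $C_1, c$ depending only on $(G,\a)$. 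Combining these estimates,
\begin{equation*}
\Sys(G, k\a) \leq \Sys_{F_k}(Y_k) \leq \frac{\vol(Y_k, h_k)}{\sys_{F_k}(Y_k, h_k)^m} \leq \frac{C_1 k \vol(X_0,g_0)}{c^m \log(1+k)} = C(G,\a) \cdot \frac{k}{\log(1+k)}.
\end{equation*}

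The main obstacle is establishing the systole lower bound. Naive gluings (wedges or iterated connected sums of $k$ copies of $X_0$) give $\sys_{F_k}(Y_k) = \sys_{f_0}(X_0)$ and hence only the trivial estimate $\Sys(G, k\a) \leq k \Sys(G, \a)$, while uniform rescaling of each copy multiplies volume and systole by the same factor and does not improve the ratio. The gain of a $\log(1+k)$ factor must therefore come from a careful design of the skeleton, forcing any short loop in $Y_k$ nontrivial in $G$ to ``thread'' through many distinct copies of $X_0$ before closing up. The admissibility of $X_0$ is essential here, since it allows every nontrivial element of $\pi_1$ in each copy to be realized by a loop avoiding the gluing locus, and Theorem~\ref{thm:realisation} is what permits comparing $\Sys_{F_k}(Y_k)$ to $\Sys(G, k\a)$ despite $Y_k$ being only a pseudomanifold.
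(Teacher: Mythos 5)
Your plan correctly reproduces the paper's reduction: use Proposition~\ref{prop:representation} and Theorem~\ref{thm:realisation} to pass to an admissible normal representation $(X_0,f_0)$ of $\a$, build a representative of $k\a$ by gluing $k$ copies of $X_0$ along a combinatorial skeleton, and then invoke Proposition~\ref{prop:modif1} to conclude $\Sys(G,k\a)\leq\Sys_{F_k}(Y_k)$. You also correctly diagnose that iterated wedges or plain connected sums cannot beat the linear bound, and that the $\log$-gain must come from the shape of the skeleton. This is exactly the strategy of Theorem~\ref{th:sommeconnexe} in the paper.

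However, the heart of the proof --- the two displayed estimates on $\vol(Y_k,h_k)$ and $\sys_{F_k}(Y_k,h_k)$ --- is asserted, not proved, and the construction you sketch cannot deliver them. Uniformly rescaling each copy of $(X_0,g_0)$ changes volume and systole by compatible powers of the scale, so the systolic ratio of each copy is unchanged; embedding $k$ such copies in any skeleton still leaves, inside each copy, a loop nontrivial under $f_0$ of the original (rescaled) systolic length, which kills any hope of pushing $\sys_{F_k}(Y_k,h_k)/\vol(Y_k,h_k)^{1/m}$ above the trivial bound. What is missing is that the metric on each copy must be \emph{genuinely deformed}, not rescaled: the paper takes a cube decomposition $\Theta$ of $X$ with $c$ cubes, cuts a smaller concentric cube out of each $m$-cube, and replaces it by a thin ``sleeve'' $\partial\mathcal C\times[0,\e]$ carrying the product metric $g_0|_{\partial\mathcal C}\times\e\,dt$. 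The resulting polyhedron $X'$ has the same fundamental group, the same $(m-1)$-skeleton with the original metric, and volume $\vol(X',g'_0)=2mc\e$, which is small when $\e$ is. This is what makes $\vol(Y_k)\sim k\e$ possible with the systole held fixed.

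The skeleton and the systole estimate are also missing and are not routine. The paper glues $2n$ copies of $X'$ along a $c$-regular graph $\Gamma$ whose combinatorial girth is at least $1/(2\e)$; because $c$-regular graphs of girth $l$ can have as many as $(c-1)^l$ vertices, one may take $\e\sim\log(c-1)/\log(2n)$, and this is precisely where the $\log$ appears. The systole lower bound $\sys(X(2n,\e),g_1)\geq1$ (Lemma~5.6) then splits into two cases using the $1$-Lipschitz projection $F$ onto $\Gamma$: loops whose $F$-image is noncontractible in $\Gamma$ have length at least $\sys(\Gamma,h)\geq1$ by the girth bound, while loops whose $F$-image is contractible are handled by a ``cusp'' argument that locates a subarc confined near a single copy and applies Lemma~\ref{lem:inf}. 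Your phrase ``forcing any short loop to thread through many distinct copies'' captures the first case but misses the second, which is where the cube structure and the orthogonal projection onto $\partial\mathcal C_j$ are actually used. Until the sleeve modification, the large-girth $c$-regular graph, and the two-case systole argument are supplied, the estimates remain conjectural and the proof is incomplete.
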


Before proving this result, we put the question in a more general
context.

\subsection{Systolic volume of the sum of homology classes} Let $G_1$ and $G_2$ be
two finitely presentable groups and denote by
$i_j : G_j \hookrightarrow G_1 \ast G_2$ for $j=1,2$ the natural monomorphisms.
Fix  two integer homology classes  $\a_1\in H_m(G_1,\Z)$ and $\a_2\in
H_m(G_2,\Z)$. The natural isomorphism
$$
H_m(G_1\ast G_2, \Z) \simeq H_m(G_1, \Z) \oplus H_m(G_2, \Z), \ \
$$
for positive $m$ allows us to denote by $\a_1 + \a_2$ the class $(i_1)_{\ast}(\a_1) +
(i_2)_{\ast}(\a_2)$.

\begin{proposition}\label{prop:modif2}
If $m\geq 3$,
$$
\Sys(G_1\ast G_2,\a_1 + \a_2)\leq \Sys(G_1,\a_1)+\Sys(G_2,\a_2).
$$
\end{proposition}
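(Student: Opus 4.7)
The plan is to mimic the proof of Proposition \ref{prop:sommeconnexe1} in the relative setting by constructing a representative of $\a_1+\a_2$ via connected sum of optimal representatives of the $\a_j$.

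First, by Proposition \ref{prop:representation}, fix normal admissible geometric cycles $(X_j,f_j)$ representing $\a_j$ for $j=1,2$; Theorem \ref{thm:realisation} then gives $\Sys_{f_j}(X_j)=\Sys(G_j,\a_j)$. Form the connected sum $X=X_1\# X_2$ by removing small $m$-disks inside $X_j\setminus\Sigma(X_j)$ (possible by admissibility) and identifying their boundaries; this is an admissible orientable pseudomanifold of dimension $m$ whose fundamental group is $\pi_1(X_1)\ast\pi_1(X_2)$ by van Kampen, using $m\geq 3$. Let $q:X\to X_1\vee X_2$ be the collapse of the separating sphere. It has degree one and, since the sphere has dimension $m-1\geq 2$, induces an isomorphism on fundamental groups. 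Setting
$$
f:=\bigl((\iota_1\circ f_1)\vee(\iota_2\circ f_2)\bigr)\circ q,
$$
where $\iota_j:K(G_j,1)\hookrightarrow K(G_1\ast G_2,1)$ is induced by the canonical monomorphism, one checks immediately that $f_\ast[X]=(i_1)_\ast\a_1+(i_2)_\ast\a_2=\a_1+\a_2$, so $(X,f)$ is a geometric cycle representing $\a_1+\a_2$, and $\Sys(G_1\ast G_2,\a_1+\a_2)\leq\Sys_f(X)$.

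Next, by the comparison principle of \cite{Bab06} applied to $q$, we have $\Sys_f(X)\leq\Sys_h(X_1\vee X_2)$, where $h=(\iota_1\circ f_1)\vee(\iota_2\circ f_2)$. To bound the right-hand side, for any $\e>0$ take polyhedral metrics $g_j$ on $X_j$ of $f_j$-systole equal to $1$ and volume at most $\Sys(G_j,\a_j)+\e$, and glue them at the wedge point. The total volume is then $\vol(X_1,g_1)+\vol(X_2,g_2)$, and the $h$-systole of the glued metric remains at least $1$: any $h$-essential loop decomposes as a concatenation of arcs at the wedge point whose images multiply to a non-trivial element of $G_1\ast G_2$, and the normal form in the free product forces at least one such arc to be $f_j$-essential in the corresponding $X_j$, hence of length $\geq 1$. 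Letting $\e\to 0$ concludes.

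The main obstacle lies in this last systole estimate on the wedge: it requires invoking the normal form theorem in the free product to rule out that several short arcs on either side of the wedge point could conspire into an $h$-essential loop without any individual arc being $f_j$-essential in its own $X_j$.
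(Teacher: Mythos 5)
Your proof is correct and follows essentially the paper's own route: represent the $\a_j$, form the connected sum, pass to the wedge via the comparison principle of \cite{Bab06}, and bound the systolic volume of the wedge by the sum --- the paper states the equality $\Sys_{f_1\vee f_2}(X_1\vee X_2)=\Sys_{f_1}(X_1)+\Sys_{f_2}(X_2)$ as known, while you spell out the $\leq$ direction, which is all that is needed. Two small remarks: the appeal to admissible normal cycles via Proposition~\ref{prop:representation} and Theorem~\ref{thm:realisation} is harmless but not required, since $\varepsilon$-optimal cycles $(X_j,f_j)$ with $\Sys_{f_j}(X_j)\leq\Sys(G_j,\a_j)+\varepsilon/2$ suffice exactly as in the paper; and the normal form theorem for free products is more than you need for the systole estimate on the wedge, since if every arc $\gamma_i$ in the decomposition satisfies $(f_{j_i})_\sharp[\gamma_i]=1$ then the whole product is trivially $1$, so an $h$-essential loop must contain at least one $f_j$-essential arc of length $\geq 1$.
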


\begin{proof}
For any $\varepsilon > 0$ and for $j=1,2$ we choose a geometric cycle
$(X_j,f_j)$ of dimension $m$ representing  $\a_j$ and satisfying
$$
\Sys_{f_j}(X_j) \leq \Sys(G_j, \a_j) + {\varepsilon \over 2}.
$$
The geometric cycle $(X_1\# X_2,f_1 \# f_2)$ obtained as the
connected sum of $(X_1,f_1)$ and $(X_2,f_2)$ represents the class
$\a_1+\a_2$. By the comparison principle (see \cite{Bab06}),
$$
\Sys_{f_1 \# f_2}(X_1 \# X_2) \leq \Sys_{f_1 \vee f_2}(X_1 \vee
X_2)=\Sys_{f_1}(X_1)+\Sys_{f_2}(X_2)
$$
where $(X_1\vee X_2,f_1 \vee f_2)$ denotes the wedge sum of
$(X_1,f_1)$ and $(X_2,f_2)$. As $\varepsilon$ can be chosen
arbitrarily small, we get the result.
\end{proof}

If $\a_1$ and $\a_2$ are two homology classes of dimension $ m$ of
the same group $G$, we deduce the following subadditivity property
of the systolic volume.

\begin{corollary}\label{cor:semiadditive}
Let $\a_1$ and $\a_2$ be two classes of $H_m(G, \Z)$ with $m\geq 3$.
Then
$$
\Sys(G, \a_1 + \a_2)\leq \Sys(G, \a_1)+ \Sys(G, \a_2).
$$
\end{corollary}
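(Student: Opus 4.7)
The plan is to deduce this from Proposition~\ref{prop:modif2} (subadditivity across a free product) together with Proposition~\ref{prop:modif1} (monotonicity of systolic volume under group homomorphisms), applied to the folding map of $G \ast G$ onto $G$.

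More concretely, I would proceed as follows. Let $G_1$ and $G_2$ be two copies of $G$, and denote by $i_j : G_j \hookrightarrow G_1 \ast G_2$ the canonical injection. Viewing $\a_1$ as a class in $H_m(G_1,\Z)$ and $\a_2$ as a class in $H_m(G_2,\Z)$, Proposition~\ref{prop:modif2} yields
$$
\Sys(G \ast G,\, (i_1)_\ast \a_1 + (i_2)_\ast \a_2) \leq \Sys(G,\a_1) + \Sys(G,\a_2).
$$
Now consider the folding homomorphism $\pi : G \ast G \to G$ which restricts to the identity on each factor. Its effect on homology is $\pi_\ast \circ (i_j)_\ast = \mathrm{id}_{H_m(G,\Z)}$, so under the identification $H_m(G\ast G,\Z) \simeq H_m(G,\Z) \oplus H_m(G,\Z)$ we get
$$
\pi_\ast\bigl((i_1)_\ast \a_1 + (i_2)_\ast \a_2\bigr) = \a_1 + \a_2
$$
in $H_m(G,\Z)$. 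Applying Proposition~\ref{prop:modif1} to $\pi$ then gives
$$
\Sys(G,\a_1+\a_2) \leq \Sys(G\ast G,\, (i_1)_\ast \a_1 + (i_2)_\ast \a_2),
$$
and chaining the two inequalities produces the desired bound.

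I do not anticipate a real obstacle here: the statement is a formal consequence of two results already established in the section, and the only thing to verify is the elementary homological computation that the folding map sends $(i_1)_\ast\a_1 + (i_2)_\ast\a_2$ to $\a_1+\a_2$, which follows from $\pi \circ i_j = \mathrm{id}_G$. The hypothesis $m \geq 3$ is inherited directly from Proposition~\ref{prop:modif2}, where it is needed to ensure that the connected sum construction used in its proof does not alter the fundamental group in an essential way.
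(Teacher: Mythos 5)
Your argument is exactly the paper's proof: chain Proposition~\ref{prop:modif2} applied to two copies of $G$ with the monotonicity of Proposition~\ref{prop:modif1} under the folding epimorphism $\pi : G \ast G \to G$ satisfying $\pi \circ i_j = \mathrm{id}_G$. The homological identity $\pi_\ast\bigl((i_1)_\ast\a_1 + (i_2)_\ast\a_2\bigr) = \a_1 + \a_2$ is precisely the observation the paper records, so there is nothing to add.
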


\begin{proof}
Indeed, if we denote by $\pi : G \ast G \to G$ the epimorphism
defined by $\pi \circ i_j=id_G$, then $\pi_\ast(\a_1 + \a_2)= \a_1
+ \a_2$. By Proposition \ref{prop:modif1}, we get the result.
\end{proof}

As a direct consequence of this corollary, we derive that
$$
\Sys(G,k\a)\leq k \cdot \Sys(G,\a)
$$
for any $\a \in H_m(G, \Z)$ with $m\geq 3$ and any integer  $k$.
In particular the limit
$$
\lim_{k \to \infty} \frac{\Sys(G,k\a)}{k}
$$
exists. Theorem \ref{th:multiple} permits us to conclude that this limit is always zero.

\subsection{Sublinear upper bound for the systolic volume of multiple classes}
Theorem \ref{th:multiple} is related to the behaviour of systolic
volume for connected sum and is a direct consequence
of the following result.

\begin{theorem}\label{th:sommeconnexe}
Let $X$ be a connected pseudomanifold of dimension $m\geq 3$ and denote by $\#_k X$ the connected sum of $k$ copies of $X$. There exists a positive number $C(X)$ depending only on the topology of
$X$ such that
\begin{equation}\label{eq:bornesup2}
\Sys(\#_k X) \leq C(X) \cdot \frac{k}{\ln (1 + k)}
\end{equation}
for any positive integer $k$.   If $k$ is large enough, this last
inequality (\ref{eq:bornesup2}) is satisfied for
$$
C(X) = m \cdot c(X) \cdot \ln c(X)
$$
where $c(X)$ stands for the minimal number of $m$-cubes in a cubical
decomposition of $X$.
\end{theorem}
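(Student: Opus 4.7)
The plan is to construct, for every positive integer $k$, an admissible geometric cycle $(Y_k, f_k)$ that normally represents the class $[\#_k X] \in H_m(\pi_1(\#_k X), \Z)$ and for which $\Sys_{f_k}(Y_k) \leq C(X) \cdot k / \ln(1+k)$. By Theorem~\ref{thm:realisation} this immediately yields the same bound for $\Sys(\#_k X)$. The point is that any admissible normal representative of the class $[\#_k X]$ will do, so we are free to abandon the naive iterated connected sum (which via Proposition~\ref{prop:sommeconnexe1} only gives the trivial linear bound $\Sys(\#_kX)\leq k\,\Sys(X)$) in favour of a realization whose loops are forced to be long by the combinatorics of an auxiliary high-girth graph.

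The concrete construction proceeds as follows. First, I would endow $X$ with the polyhedral metric coming from a minimal cube decomposition into $c(X)$ unit $m$-cubes, so that $\vol(X)=c(X)$ and $\sys(X)\geq 1$; by Proposition~\ref{prop:representation} we may assume $X$ is admissible. Distinguish a cube $Q_\ast\subset X$ with boundary sphere $S_\ast$ to serve as a gluing port. Choose a $d$-regular graph $\Gamma_k$ on $k$ vertices with girth $\gamma_k$ of order $\ln k$---such graphs exist by the Ramanujan construction of Lubotzky--Phillips--Sarnak. Assemble $Y_k$ from $k$ copies $\{X_v\}_{v\in V(\Gamma_k)}$ of $X$ (with $d$ copies of $Q_\ast$ removed at each vertex), gluing boundary spheres along the edges of $\Gamma_k$ via the identity on $S_\ast$. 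The resulting $Y_k$ is an orientable admissible pseudomanifold of dimension $m$ with $\pi_1(Y_k)=\pi_1(X)^{\ast k}\ast F_{b_1(\Gamma_k)}$. Defining $f_k \colon Y_k \to K(\pi_1(X)^{\ast k},1)$ by applying the classifying map of $X$ on each block and collapsing the extra free factor, a Mayer--Vietoris calculation identifies $(f_k)_\ast[Y_k]$ with $(\alpha,\dots,\alpha)=[\#_k X]$, and after attaching a handful of 1-handles one arranges that $(f_k)_\sharp$ is surjective, so the pair is a normal admissible realization.

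The metric estimate is the real content. The cube count gives $\vol(Y_k)\lesssim k\,c(X)$, and any loop with $f_k$-noncontractible image either sits essentially inside a single block $X_v$ (contributing $\geq\sys(X)$) or traverses at least $\gamma_k\asymp\ln k$ edges of $\Gamma_k$ and so projects to a nontrivial cycle of the scaffold. The hard part---and where I expect the proof really happens---is the systole amplification: the uniform cube metric only produces the trivial linear bound because short internal loops of the individual $X_v$'s dominate the systole. The resolution is an asymmetric rescaling of the blocks against the gluing necks, combined with a cubical subdivision of each $X_v$ synchronised to the scaffold, which simultaneously stretches every internal noncontractible loop out to length $\asymp \ln k$ while inflating the total volume only by a factor of order $\ln c(X)$. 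The spectral properties of the Ramanujan graph $\Gamma_k$ are exactly what allows this joint rescaling to be achieved without either amplification blowing up the volume or subdivision killing the girth advantage; unwinding the bookkeeping gives the bound $\Sys_{f_k}(Y_k)\leq m\,c(X)\,\ln c(X)\cdot k/\ln(1+k)$ for $k$ large, with the dimension $m$ appearing through the $m$-th power in the systolic ratio and the $\ln c(X)$ through the subdivision overhead.
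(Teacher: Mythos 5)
Your high-level architecture---glue $k$ copies of $X$ along a high-girth regular graph and appeal to Theorem~\ref{thm:realisation}---is in the right family, but the metric mechanism you invoke is not the one that works, and as stated it would not give the sublinear bound.

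The issue is the paragraph on ``systole amplification.'' You correctly identify that with the flat cube metric on every block, short internal loops of each $X_v$ cap the systole at $O(1)$ while the volume is $\sim k\,c(X)$, yielding only the trivial linear bound. You then propose to ``stretch every internal noncontractible loop out to length $\asymp\ln k$'' by an ``asymmetric rescaling of the blocks.'' But any homothety of a block by a factor $\lambda$ inflates its volume by $\lambda^m$, so $\vol/\sys^m$ is scale-invariant and rescaling cannot change the systolic ratio; the appeal to ``spectral properties of the Ramanujan graph'' to rescue this is not an argument, it is a placeholder for the missing idea. The paper's proof goes in the opposite direction: it does \emph{not} lengthen internal loops, it \emph{shrinks the volume} while keeping the systole of order $1$. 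Concretely, each cube of the minimal cubulation of $X$ is hollowed out to a thin sleeve $\partial\mathcal{C}\times[0,1]$ carrying a product metric with $\varepsilon$-thin collar direction, so $\vol(X',g'_0)=2mc\varepsilon$ is tiny, yet any loop that is not captured by a cycle of the gluing graph must cross an entire cube boundary in the $(m-1)$-skeleton (which keeps unit edge lengths) and therefore has length $\geq 2$ (Lemma~\ref{lem:inf}); loops that \emph{are} captured by the graph have length controlled by the graph girth. This is why one glues along a $c$-regular graph where $c=c(X)$ --- one gluing port per cube, not $d$ arbitrary ports --- and why only a high-girth regular graph is needed, with no spectral input whatsoever. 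With $2n\leq(c-1)^{1/(2\varepsilon)}$ vertices, $\vol\leq 4mnc\varepsilon$ and $\sys\geq 1$, which is exactly the $mc\ln c\cdot k/\ln k$ bound. Without this hollowing-out construction your gluing scheme has no route to a sublinear estimate, so the gap is genuine rather than a matter of bookkeeping.
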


Recall that a cubical decomposition of $X$ is a family of $m$-dimensional embedded topological cubes covering $X$ and such that any non-empty intersection between two cubes occurs along a lower dimensional face. In particular, any $(m-1)$-face belongs to exactly two $m$-cubes. The existence of such decompositions can be proved by considering a triangulation of $X$  and decomposing each $m$-simplex of this triangulation into $m+1$ $m$-cubes.

\begin{remark}\label{rem:cyclic}
Theorem \ref{th:sommeconnexe} substantially improves \cite[Theorem A]{BB05}, and the upper bound (\ref{eq:bornesup2}) still holds for a sequence of cyclic covering, the details being similar to those considered in the sequel (compare with \cite{BB05}). \\
\end{remark}

\begin{proof}
The proof relies on ideas and techniques appearing in \cite{BB05}.  
We consider a minimal cubical decomposition $\Te$ of $X$ with $c(X)$ elements. From each cube ${\mathcal C} \in \Te$ we remove an open cube  ${\mathcal C}'\simeq ]1/4,3/4[^m \subset [0,1]^m\simeq {\mathcal C}$ and denote by $X'$ the polyhedron thus obtained. Each set (called a {\it sleeve} of $X'$)
$$
\overline{{\mathcal C}} = {\mathcal C} \setminus
{\mathcal C}' \simeq \partial{\mathcal C} \times [0,1/2] ,
$$
is endowed with the product metric $g_{\partial{\mathcal C}} \times {1\over N} \, dt$ where $g_{\partial{\mathcal C}}$ denotes the metric on $\partial{\mathcal C}$ induced by a realization of ${\mathcal C}\simeq[0,1]^m \subset \R^m$ in the standard Euclidean vector space and $N$ is a positive integer to be chosen later.
Choose the metrics $\{g_{\partial{\mathcal C}}\}_{{\mathcal C} \in \Theta}$ to coincide on each non-empty intersection between two $m$-cubes, and denote by $g'$ the Riemannian polyhedral metric thus obtained on $X'$. 
By construction,

\begin{itemize}
\item the complex $X'$ is homeomorphic to  $X$ minus $c(X)$
disjoint open $m$-disks, and in particular
$\pi_1(X') = \pi_1(X)$ ;

\item $\vol(X', g') = m \cdot {1 \over N}  \cdot c(X)$ ;

\item the obvious retraction of  $X'$ onto $\Te^{(m-1)}$ (where $\Te^{m-1}_i$ denotes the $(m-1)$-skeleton of $\Te$) is distance non-increasing.

\end{itemize}

Fix an $m$-cube ${\mathcal C} \in \Te$. We define the star $\overline{\mbox{st}}({\mathcal C})$ as the union of all cubes with non-empty intersection with ${\mathcal C}$ (including ${\mathcal C}$ itself). The open star $\mbox{st}({\mathcal C})$ is then defined as the union of the interiors of all cubes having a non-empty intersection with ${\mathcal C}$, so the closure of $\mbox{st}({\mathcal C})$ is exactly $\overline{\mbox{st}}({\mathcal C})$. The following lemma will be important in what follows. 

\begin{lemma}\label{lem:inf}
If a relative curve $\gamma$ of $(X',\overline{{\mathcal C}})$ is not entirely included in $X'\cap \mbox{st}({\mathcal C})$, then  $l_{g'}(\gamma) \geq 2$.
\end{lemma}

\begin{proof}[Proof of the Lemma]
Let $p$ belongs to the boundary $\partial \, (\overline{\mbox{st}}({\mathcal C}))=\overline{\mbox{st}}({\mathcal C})\setminus \mbox{st}({\mathcal C}) $. Then 
$$
d_{g'}(p,\partial {\mathcal C})\geq 1
$$
which proves the Lemma. 

Indeed, suppose that this is not the case. The point $p$ lies in the $(m-1)$-face $F$ of an $m$-cube ${\mathcal C}_1 \subset \overline{\mbox{st}}({\mathcal C})$. But $F$ belongs to exactly two $m$-cubes ${\mathcal C}_1$ and ${\mathcal C}_2$. As $d_{g'}(F,\partial {\mathcal C})\leq d_{g'}(p,\partial {\mathcal C})<1$, we conclude that $F\cap {\mathcal C}\neq \emptyset$ which implies that  ${\mathcal C}_2 \subset \overline{\mbox{st}}({\mathcal C})$. This together with the condition $d_{g'}(p,\partial {\mathcal C})<1$ implies that $p\in \mbox{st}({\mathcal C})$ which is a contradiction.
\end{proof}

According to the construction of graphs with large girth by \cite{ES63}, there exists for any positive integers $n$ and $N$ such that
$$
n\geq 2\sum_{t=1}^{N-2}(c(X)-1)^t
$$ 
a $c(X)$-regular graph $\Gamma$ with $2n$ vertices whose girth---defined as the least number of edges composing a cycle---is at least $N$. The graph $\Gamma$ can be thought as a $1$-dimensional polyhedron on which we define the PL-metric $h$ for which all edges have length ${1\over N}$. The girth's bound thus says that $\sys(\Gamma,h)\geq 1$.

Denote by $v_1,\ldots,v_{2n}$ the vertices of $\Gamma$ and let $(X'_1,g'_1),\ldots,(X'_{2n},g'_{2n})$ be $2n$ copies of the Riemannian polyhedron $(X',g')$. We define a map $F : X'_1 \cup \ldots \cup X'_{2n} \to \Gamma$ as follows. For each $i=1,\ldots,2n$,
\begin{itemize}
\item  Set $F(p)=v_i$ if $p \in \Te^{m-1}_i$ where $\Te^{m-1}_i$ denotes the $(m-1)$-skeleton of the cubical decomposition $\Te_i$ corresponding to $X'_i$. 
\item Denote by $e^i_1,\ldots,e^i_{c(X)}$ the edges adjacent to $v_i$, and by $\overline{{\mathcal C}}^i_1,\ldots,\overline{{\mathcal C}}^i_{c(X)}$ the sleeves of $X'_i$ corresponding to the cubical decomposition $\Theta_i$.  For each $j=1,\ldots,c(X)$, each edge $e^i_j$ is isometric to $([0,1],dt)$ with $0$ being identified with the vertex $v_i$. Then for each point $p=(x,t)$ belonging to the sleeve $\overline{{\mathcal C}}^i_j \simeq \partial{\mathcal C}^i_j \times [0,1/2]$ set $F_i(p)=t$.
\end{itemize}
 Observe that each restriction $F_{\vert X'_i} : (X'_i,g'_i) \to (\Gamma,h)$ is distance non-increasing.

Now to each edge $e=(v_i,v_j) \in E(\Gamma)$ correpond exactly two sleeves $\overline{{\mathcal C}}$ and $\overline{{\mathcal C}'}$ belonging respectively to the two distincts copies $X'_i$ and $X'_j$ of $X'$ and such that their images under $F$ cover the edge $e$. We choose a PL-isometry denoted by $\phi_e$ between $(\partial{\mathcal C} \times \{1/2\},g'_i)$ and  $(\partial{\mathcal C'} \times \{1/2\},g'_j)$. The quotient space
$$
X(n)=(X'_1\cup \ldots \cup X'_{2n})/\{\phi_e\}_{e \in  E(\Gamma)}
$$ 
is a polyhedron homeomorphic to
$$
(\mathop{\#}_{2n}X)\#(\mathop{\#}_{n(c-2)+1}S^1 \times S^{m-1}).
$$
The metrics $\{g'_i\}_{i=1,\ldots,n}$ are compatible with the quotient map $X'_1 \cup \ldots \cup X'_n \twoheadrightarrow X(n)$ and give rise to a Riemannian polyhedral metric on $X(n)$ that we denote by $g(n)$.
The map $F$ induces a new map still denoted by $F$ from $X(n)$ onto $\Gamma$. By construction this map $F : (X(n),g(n)) \to (\Gamma,h)$ is distance non-increasing.

\medskip

According to \cite[Proposition 1]{BB05} the
addition of $1$-handles does not change the value of the systolic
volume, and therefore  
$$
\Sys(\#_{2n}X) = \Sys(X(n))\leq \frac{\vol(X(n), g(n))}{(\sys(X(n),g(n))^m}. 
$$
We have 
$$
\vol(X(n),g(n))=2n \cdot \vol(X',g')= 2n \cdot m \cdot {1Ê\over N} \cdot c(X). 
$$

\begin{lemma}
\begin{equation}\label{eq:sys}
\sys(X(n), g(n)) \geq 1.
\end{equation}
\end{lemma}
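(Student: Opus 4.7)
Following the scheme of \cite[proof of Theorem A]{BB05}, I plan to take a non-contractible loop $\gamma$ in $(X(2n, \e), g_1)$ and establish $\text{length}_{g_1}(\gamma) \geq 1$ by a case analysis on the projection $F \circ \gamma$ in $\Gamma$.

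First suppose $F \circ \gamma$ is non-contractible in $\Gamma$. Since $F$ is distance-decreasing and the metric systole of $\Gamma$ exceeds $1$ (the combinatorial systole being larger than $1/(2\e)$ and each edge having length $2\e$), one has
\[
\text{length}_{g_1}(\gamma) \geq \text{length}_h(F \circ \gamma) \geq \sys(\Gamma, h) > 1.
\]

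Now suppose $F \circ \gamma$ is null-homotopic in $\Gamma$. The aim is to homotope $\gamma$, without increasing its length, into a single copy of $X' \subset X(2n, \e)$, and then apply a local systolic estimate. Since $F \circ \gamma$ contracts in the graph $\Gamma$, each tube is traversed an even number of times by $\gamma$ and the corresponding crossings can be paired off. Combined with the distance-decreasing retractions of each sleeve $\overline{\mathcal{C}} \simeq \partial \mathcal{C} \times [0, 1]$ onto the inner boundary $\partial \mathcal{C}$, this allows one to homotope $\gamma$ to a loop $\gamma'$ contained in the $(m-1)$-skeleton $\Te^{(m-1)}_v$ of a single copy of $X'$. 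The decisive tool here is Lemma \ref{lem:inf}: a sub-arc of $\gamma$ that exits the open star of a cube and returns to $\partial \mathcal{C}$ necessarily has length at least $2$, which forbids loops of length below $1$ from making topologically essential tube excursions and thereby guarantees that the pushing is both length-non-increasing and preserves the homotopy class of $\gamma$ in $X(2n, \e)$.

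Since $\gamma$ was non-contractible, so is $\gamma'$, and it lies in the unit cube complex $\Te^{(m-1)}_v$. In any such complex every open metric ball of radius less than $1/2$ is contained in the open star of some cell, and hence is contractible. The standard argument --- a loop of length $L$ is contained in the ball of radius $L/2$ about any of its points --- then gives $\text{length}(\gamma') \geq 1$, whence $\text{length}_{g_1}(\gamma) \geq 1$.

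The main obstacle is the pushing step in the second case: one must simultaneously invoke the null-homotopy of $F \circ \gamma$ in $\Gamma$ and the sleeve retractions while controlling both the length and the homotopy type of $\gamma$ in $X(2n, \e)$. Lemma \ref{lem:inf} is essential precisely because it precludes the shortcuts that would otherwise spoil this argument, and the rest of the case then proceeds by direct adaptation of the corresponding reasoning in \cite{BB05}.
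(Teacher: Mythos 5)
Your Case 1 matches the paper exactly. But the heart of your Case 2 — pushing $\gamma$, length-non-increasingly, into the $(m-1)$-skeleton of a \emph{single} copy of $X'$ — is not justified and is in fact the wrong global strategy. When $F\circ\gamma$ is null-homotopic in $\Gamma$, the word traced by $\gamma$ in $\pi_1(\Gamma)$ reduces to the identity, and in that sense tube crossings ``pair off''; but the actual loop $\gamma$ may visit many different copies $X'_v$ and carry essential topology \emph{inside} several of them. A pair of cancelling crossings of a tube separates a sub-arc of $\gamma$ that lives in another copy and need not be contractible, so retracting it back through the tube is not a length-non-increasing homotopy, and there is no reason $\gamma$ can be confined to one block.

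The paper's proof avoids exactly this pitfall by making the argument local. Taking $\gamma$ length-minimizing in its homotopy class, the null-homotopy of $F\circ\gamma$ (lift to the tree $\tilde\Gamma$) forces the existence of a \emph{cusp}: a sub-arc $\gamma|_{[t_1,t_2]}$ whose image under $F$ enters the interior of an edge $[s_i,s_j]$, reaches a farthest point $v$, and retreats, with $F(\gamma(t_1))=F(\gamma(t_2))=s_i$. If $v\in\,]s_i,s_j[\,$, one contracts $\gamma|_{[t_1,t_2]}$ inside $F^{-1}([s_i,v[)$, strictly shortening $\gamma$ — contradiction with minimality. So $v=s_j$, and $\gamma|_{[t_1,t_2]}=\gamma_1\star\gamma_2\star\gamma_3$ with $\gamma_2\subset F^{-1}(\{s_j\})=\Te_j^{(m-1)}$. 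Then: either $\gamma_2$ exits the star $\mbox{st}(\overline{\mathcal C}_j)$, and Lemma~\ref{lem:inf} gives $l_{g_1}(\gamma)\geq l_{g_0'}(\gamma_2)\geq 2$ directly; or $\gamma_2$ stays in the star, one orthogonally projects it onto $\partial{\mathcal C}_j$ and then contracts it inside $\overline{\mathcal C}_j\setminus\partial{\mathcal C}_j$, again strictly shortening $\gamma$ — contradiction. Note this yields the stronger bound $l_{g_1}(\gamma)\geq 2$ in Case 2, and nowhere does it need to confine $\gamma$ to one copy or to invoke contractibility of small balls in a cube complex. Your appeal to Lemma~\ref{lem:inf} as ``forbidding essential tube excursions'' gestures at the right tool but applies it too loosely; the paper applies it precisely to the arc $\gamma_2$ sitting over the cusp vertex $s_j$, after the minimality argument has cleaned up everything else.
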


\begin{proof}
Let $\gamma : [0,1] \to X(n)$ be a closed curve which is not contractible.

\smallskip

If its image $F(\gamma)$ is not contractible in $\Gamma$, then
$$
l_{g(n)}(\gamma) \geq l_h(F(\gamma)) \geq \sys(\Gamma, h) \geq 1
$$
as $F$ is distance non-increasing.

\medskip

Now suppose that $F(\gamma)$ is contractible in $\Gamma$. We will show that
$l_{g(n)}(\gamma) \geq 2$ using Lemma \ref{lem:inf}.

First of all, we can assume that $\gamma$ is minimizing in its own
homotopy class. The contractibility of $F(\gamma)$ implies that we can find an
edge $[v_i,v_j]$ of $\Gamma$ joining two vertices $v_i$ and $v_j$,
a point $v \in ]v_i,v_j]$ and a triplet $0 \leq t_1 < t_{\ast} < t_2 \leq 1$ such that 
\begin{itemize}
\item $v = F(\gamma(t_{\ast}))$ ;
\item $F(\gamma([t_1, t_2]) \subset [v_i, v]$ ;
\item $F(\gamma(t_1))=F(\gamma(t_2))=v_i$.
\end{itemize}
Denote by $m_{ij}$ the midpoint of the edge $[v_i,v_j]$, and by
$$
\overline{\mathcal C}_i=\overline{F^{-1}(]v_i,m_{ij}[)} \, \, \, \, \text{and} \, \, \, \, \overline{\mathcal C}_j=\overline{F^{-1}(]m_{ij},v_j[)}
$$
the two corresponding sleeves.

\smallskip

If $v \in ]v_i,m_{ij}]$, we contract the portion of the curve
$\{\gamma(t)\}_{t \in [t_1, t_2]}$ using the projection $\pi_i : \overline{\mathcal C}_i\simeq \partial {\mathcal C}_i\times [0,1/2]  \to \partial {\mathcal C}_i\times \{0\}$ given by the formula $\pi_i(p,t)=(p,0)$. 
This contraction strictly decreases the length of $\gamma$ which is in
contradiction with its minimality. 
If $v \in ]m_{ij},v_j[$, we contract the portion of the curve
$\{\gamma(t)\}_{t \in [t_1, t_2]}$ lying in $\overline{\mathcal C}_j$ using the projection $\pi_j : \overline{\mathcal C}_j\simeq \partial {\mathcal C}_j\times [0,1/2]  \to \partial {\mathcal C}_j\times \{1/2\}$  given by the formula $\pi_j(p,t)=(p,1/2)$. 
This contraction also strictly decreases the length of $\gamma$ which is again in
contradiction with its minimality. 
So the point $v$ must coincides with $v_j$.

\smallskip

The restriction of $\gamma$ to the interval $[t_1, t_2]$ is thus
the concatenation  $\gamma_1 \star \gamma_2 \star \gamma_3$ of
three curves with $\gamma_{1}$,  $\gamma_ 3 \subset
F^{-1}([v_i,v_j[)$ and $\gamma_2 \subset
F^{-1}(v_j)=\Te_j^{m-1}$. We will show that $\gamma_2$ is not entirely contained in the open star $\mbox{st}({\mathcal C}_j)$ corresponding to the sleeve $\overline{\mathcal C}_j$, and thus
$$
l_{g(n)}(\gamma) \geq l_{g'_j}(\gamma_2) \geq 2
$$
according to Lemma \ref{lem:inf} which concludes the proof of the lemma.

\smallskip

Indeed suppose that $\gamma_2$ is entirely contained in the open star $\mbox{st}({\mathcal C}_j)$. 
Observe first that $\gamma_2 \not\subset \overline{\mathcal C}_j$. Otherwise we contract the portion of the curve $\{\gamma(t)\}_{t \in [t_1, t_2]}$ lying in $\overline{\mathcal C}_j$ using the projection $\pi_j : \overline{\mathcal C}_j\simeq \partial {\mathcal C}_j\times [0,1/2]  \to \partial {\mathcal C}_j\times \{1/2\}$  given by the formula $\pi_j(p,t)=(p,1/2)$. This contraction strictly decreases the length of $\gamma$ which is in contradiction with its minimality. Now the orthogonal projections of  $\mbox{st}(\overline{\mathcal C}_j)\cap \Te_j^{m-1}$ on each face of $\overline{\mathcal C}_j \cap \Te_j^{m-1}$ are correctly defined and coherents. So by projecting $\gamma_2$ orthogonally on $\overline{\mathcal C}_j \cap \Te_j^{m-1}$ we do not change the homotopy class of $\gamma$, and as $\gamma_2 \not\subset \overline{\mathcal C}_j$, the length of $\gamma$ strictly decreases. This is again a contradiction with its minimality.
\end{proof}

So
\begin{equation}\label{eq:sup}
\Sys(\#_{2n}X) \leq \frac{\vol(X(n), g(n))}{(\sys(X(n),
g(n))^m} \leq m\cdot {2n\over N}\cdot c(X)
\end{equation}
for any positive integers $n$ and $N$ such that 
$$
n \geq {2 \over c(X)-2}[(c(X)-1)^{N-1} - (c(X)-1)].
$$

\medskip

Fix a positive integer $n$. As $c(X) \geq 2m + 1 \geq 7$, we choose $N$ such that
$$
n\in [(c(X)-1)^{N-1},(c(X)-1)^N],
$$
and get that
$$
\Sys(\#_{2n}X) \leq m \cdot c(X) \cdot \ln (c(X)-1)\cdot {2n \over \ln (2n+1)}
$$
according to (\ref{eq:sup}).
Furthermore
\begin{eqnarray}
\nonumber \Sys(\#_{2n+1}X) & \leq & \Sys(\#_{2n}X) + \Sys(X)\\
\nonumber & \leq & m \cdot c(X) \cdot \ln (c(X)-1)\cdot {2n \over \ln (2n+1)} + \Sys(X)\\
\nonumber & \leq & m \cdot c(X) \cdot \ln (c(X)-1)\cdot {2n+1 \over \ln (2n+2)}+ \Sys(X). \nonumber
\end{eqnarray}
For $n$ large enough, we thus get the universal upper bound
(\ref{eq:bornesup2}) which concludes the proof.
\end{proof}

\subsection{Homology classes with positive simplicial volume}\label{sub:hyp}

Recall the following definition (see \cite{Gro82}).

\begin{definition}
Let $X$ be a pseudomanifold of dimension $m$. Its {\it simplicial
volume} is the quantity
 $$
 \|X\|_{\Delta} = \inf \, \{\sum_i |r_i| \, | \, [X] = \sum_i r_i \sigma_i^m  \},
$$
where the infimum is taken over the set of representations of the
fundamental class $[X]$ by singular simplicial chains with real
coefficients.

If $G$ denotes a finitely presentable group and $\a$ a homology
class of dimension $m$, the simplicial volume of $\a$ is then the
number
$$
\|\a\|_{\Delta}=\inf \, \{\|X\| \mid X \, \text{representing} \ \
\a \}.
$$
\end{definition}
For homology classes whose simplicial volume is positive, the
function $\Sys(G,k\a)$ goes to infinity and the following result provides a better information about its asymptotic behaviour.

\begin{corollary}\label{cor:asymptotique} Let $G$ be a finitely presentable
group and $\a \in H_m(G, \Z)$ be a homology class of dimension $m \geq 3$ such that
$\|\a\|_{\Delta} > 0$. Then there exists two positive numbers
$C(G,\a)$ and
 $C'(G,\a)$ depending only on the pair $(G,\a)$ such that
$$
C'(G,\a) \cdot \frac{k}{(\ln(1 +k))^m} \leq \Sys(G,k\a)
\leq C(G,\a) \cdot \frac{k}{\ln(1 + k)}
$$
for any positive integer $k$.
\end{corollary}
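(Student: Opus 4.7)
The upper bound is nothing more than the content of Theorem \ref{th:multiple} (equivalently Theorem \ref{th:multipleintro}), which gives $\Sys(G,k\a)\leq C(G,\a)\cdot k/\ln(1+k)$ for every $k\geq 1$ and any pair $(G,\a)$ with $m\geq 3$; no positivity hypothesis on $\|\a\|_\Delta$ is needed there, so the right-hand inequality requires no further argument.

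For the lower bound, the plan is to feed the class $k\a$ into Gromov's estimate in terms of simplicial volume (Theorem \ref{th:gromovintro} part (2)):
$$
\Sys(G,k\a) \geq C''_m \cdot \frac{\|k\a\|_\Delta}{\bigl(\ln(2+\|k\a\|_\Delta)\bigr)^m}.
$$
The key algebraic input is the positive homogeneity of the simplicial volume under integer scaling, namely $\|k\a\|_\Delta = k\,\|\a\|_\Delta$ for $k\geq 1$. This is immediate from the definition of $\|\cdot\|_\Delta$ as a seminorm on real singular homology: multiplying the real coefficients of a representing chain for $\a$ by $k$ produces a representing chain for $k\a$ with $\ell^1$-norm multiplied by $k$, and the converse by dividing. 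The positivity assumption $v:=\|\a\|_\Delta>0$ then makes the denominator the only quantity in need of estimation.

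Setting $v := \|\a\|_\Delta>0$, I would next compare $\ln(2+kv)$ with $\ln(1+k)$. An elementary case analysis shows $\ln(2+kv) \leq A(v)\ln(1+k)$ for all $k\geq 1$, where $A(v)$ depends only on $v$: if $v\geq 1$, then $\ln(2+kv) \leq \ln(2v) + \ln(1+k) \leq \bigl(1+\log_2(2v)\bigr)\ln(1+k)$ using $\ln(1+k)\geq \ln 2$; if $0<v<1$, then $\ln(2+kv)\leq \ln(2+k)\leq 2\ln(1+k)$. Substituting into the previous display yields
$$
\Sys(G,k\a) \;\geq\; \frac{C''_m\,v}{A(v)^m}\cdot\frac{k}{\bigl(\ln(1+k)\bigr)^m},
$$
and setting $\tilde C(G,\a) := C''_m\,v/A(v)^m$ finishes the lower bound.

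There is no serious obstacle here: the genuine work is already done in Theorem \ref{th:multiple} for the upper estimate and in Gromov's Theorem \ref{th:gromovintro}(2) for the lower one, and the corollary is really a bookkeeping combination. The only point requiring care is the elementary manipulation absorbing the constants $\|\a\|_\Delta$ and the logarithmic shift into a single constant $\tilde C(G,\a)$ independent of $k$, together with the remark that the homogeneity $\|k\a\|_\Delta=k\|\a\|_\Delta$ is exactly what propagates the positivity hypothesis from $\a$ to every multiple.
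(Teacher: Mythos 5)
Your proof is correct and follows essentially the same route as the paper: the upper bound is cited directly from Theorem \ref{th:multiple}, and the lower bound combines Gromov's simplicial-volume estimate (Theorem \ref{th:gromovintro}(2)) with the homogeneity $\|k\a\|_\Delta = k\|\a\|_\Delta$. The paper leaves the final absorption of $\ln(2+k\|\a\|_\Delta)$ into $(\ln(1+k))^m$ implicit, whereas you carry out that elementary bookkeeping explicitly — a welcome addition rather than a different argument.
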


\begin{proof}
The lower bound is a direct consequence of the following
inequality of Gromov (see \cite[Theorem 6.4.D']{Gro83}): any
pseudomanifold $X$ of dimension $m$ satisfies the inequality
$$
C_m'\frac{\|X\|_{\Delta}}{(\ln(2 + \|X\|_{\Delta}))^m} \leq \Sys(X),
$$
where $C'_m$ is a positive number depending only on the dimension
$m$. It remains to remark that, if $X$ represents the class $ka$,
then $\|X\|_{\Delta} = \|k\a\|_{\Delta} = k\|\a\|_{\Delta}$. The
upper bound then follows by Theorem \ref{th:multiple}.
\end{proof}

\subsection{Large oscillations of systolic volume}

The following example shows that the function $k
\mapsto \Sys(G,k\a)$ may have arbitrarily large oscillations. \\

Let $m = 2l + 1 \geq 3$ be an odd integer and $q\geq 2$ an
integer. Let $X$ be an essential manifold of dimension $m$ (for
example aspherical) and $f:X \to K(\pi_1(X), 1)$ be its
classifying map. If $X$ is not aspherical, we assume that the
image of its fundamental class $f_{\ast}[X]$ is an element of
infinite order in $H_m(\pi_1(X), \Z)$. Set $a=f_{\ast}[X]$. Fix a
generator $\b \in H_m(\Z_q, \Z) = \Z_q$. For each $1 \leq l \leq
q-1$, we choose a normal representation of $l\b$ by a manifold
$Y_l$ with  $\pi_1(Y_l) = \Z_q$. For relatively prime $l$ and $q$, the
corresponding lens space can be chosen to be $Y_l$. Set
$$
D = \mathop{\max}\limits_{1 \leq k \leq q} \Sys(\pi_1(X),k\a)
$$
and fix any positive number $C$. Consider the free product
$$
G_n = \pi_1(X) \ast \underset{n}{\underbrace{\Z_q \ast \ldots \ast
\Z_q}},
$$
and pick in
$$
H_m(G_n, \Z) = H_m(\pi_1(X), \Z) \oplus H_m(\Z_q, \Z) \oplus
H_m(\Z_q, \Z) \oplus ... \oplus H_m(\Z_q, \Z)
$$
the class ${\bf c} = \a \oplus \underset{n}{\underbrace{\b \oplus
\ldots \oplus \b}}$. If $X_k$ is a manifold representing the class
$k\a$, then
$$
X_k \# \underset{n}{\underbrace{Y_k \# \ldots \# Y_k}}
$$
represents the class $k{\bf c}$. By Corollary \ref{cor:somme},
$$
\Sys(G_n,k{\bf c}) \geq \Sys(\underset{n}{\underbrace{Y_k \#
\ldots \# Y_k}}).
$$
 Now Theorem {\bf A} of \cite{Sab07} implies that if  $n$ is chosen large enough, we have $\Sys(G_n,k{\bf c}) > C$ for any $1
\leq k \leq q-1$. Besides $q{\bf c} = q\a$ in $H_m(G_n, \Z)$, and so $\Sys(G_n,q{\bf c}) \leq D$.\\

\subsection{Systolic generating function}

If $G$ is a finitely presentable group and $\a \in H_m(G, \Z)$ is
a homology class of infinite order, the study of the sequence
$\{\Sys(G,k\a)\}_{k = 1}^{\infty}$ is equivalent to the study of
analytic properties of the following generating function:
$$
\sigma_{G, \a}(z) = \mathop{\sum}\limits_{k =
1}^{\infty}\Sys(G,k\a) \cdot z^k .
$$
If the group  $G$ is clearly identified by the context, we
simplify the notation into $\sigma_\a(z)$. The upper bound
(\ref{eq:bornesup}) implies that $\sigma_\a(z)$ is an analytic
function on the disk $|z| < 1$. Furthermore the complex point $z =
1$ is a singular point of this function as  $\Sys(G,k\a) \geq
\sigma_m > 0$.

In general there is no hope for $\sigma_{G, \a}(z)$ to be a
rational function. Indeed, if  $\a$ is a class with positive
simplicial volume, Corollary \ref{cor:asymptotique} teachs us that
$z = 1$ is not a pole of the corresponding systolic generating
function: so $\sigma_\a(z)$ is not rational. It is well known that
the rationality of the generating function of a numerical sequence
is equivalent to the recurrence of this sequence. We deduce the
following

\begin{proposition}\label{prop:generatrice}
Let $G$ be a finitely presentable group and $\a \in H_m(G, \Z)$ a
homology class of infinite order. If the simplicial volume of $\a$
is positive, the sequence of systolic volumes $\{\Sys(G,k\a)\}_{k
= 1}^{\infty}$ does not satisfy any linear recurrent equation.
\end{proposition}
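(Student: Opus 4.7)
The plan is to argue by contradiction using the classical equivalence — already recalled in the paragraph preceding the statement — between the rationality of the generating function $\sigma_{G,\a}(z)$ and the existence of a linear recurrence with constant coefficients satisfied by the sequence $a_k := \Sys(G,k\a)$. Suppose such a recurrence exists, so that $\sigma_{G,\a}(z) = P(z)/Q(z)$ is rational. A partial fraction decomposition (expanded at the origin) then yields a closed form
$$
a_k = \sum_{i=1}^{N} P_i(k)\, \mu_i^k,
$$
where the $\mu_i \in \C^\ast$ are the distinct roots of the associated characteristic polynomial (equivalently, the reciprocals of the poles of $\sigma_{G,\a}$), and each $P_i$ is a polynomial of degree less than the multiplicity of $\mu_i$.

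I would first use Corollary \ref{cor:asymptotique} to constrain the $\mu_i$. The upper bound $a_k \leq C(G,\a)\, k/\ln(1+k)$ is subexponential, which rules out any $|\mu_i| > 1$: such a term would eventually dominate and force exponential growth. So every $\mu_i$ appearing with nonzero $P_i$ lies in the closed unit disk. On the other hand, the lower bound $a_k \geq \tilde{C}(G,\a)\, k/(\ln(1+k))^m$ forces $a_k \to +\infty$, which excludes the case where every such $\mu_i$ lies strictly inside the unit disk (that case gives $a_k \to 0$). Hence at least one $\mu_i$ sits on the unit circle with a nontrivial $P_i$.

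The heart of the proof is then an asymptotic comparison. Set $J := \max\{\deg P_i \,:\, |\mu_i| = 1\}$, let $c_i$ denote the leading coefficient of $P_i$ for the indices realizing this maximum, and isolate the dominant part of the closed form:
$$
a_k = k^J\, g(k) + O(k^{J-1}), \qquad g(k) := \sum_{|\mu_i|=1,\ \deg P_i = J} c_i\, \mu_i^k.
$$
The function $g$ is a nonzero trigonometric (almost periodic) polynomial in $k$, and a standard Vandermonde / linear-independence argument for distinct complex exponentials gives $\sup_{k\in\N}|g(k)| > 0$. If $J = 0$ this makes $a_k$ bounded, contradicting $a_k \to \infty$. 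If $J \geq 1$ we obtain $\limsup_{k\to\infty} |a_k|/k^J = \sup_k |g(k)| > 0$, but the upper bound from Corollary \ref{cor:asymptotique} yields $|a_k|/k^J \leq C(G,\a)/(k^{J-1}\ln(1+k)) \to 0$, a contradiction.

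The principal technical point, and the step I expect to require the most care, is the non-vanishing assertion $\sup_k |g(k)| > 0$ whenever not all $c_i$ vanish. This amounts to the linear independence over $\C$ of the functions $k \mapsto \mu_i^k$ on $\N$ for distinct $\mu_i \in \C^\ast$, which follows from the invertibility of the corresponding Vandermonde matrix; equivalently, a nontrivial exponential polynomial in $k$ cannot vanish on an infinite arithmetic progression. Once this independence is in hand, the rest of the argument is routine asymptotic bookkeeping driven by the bounds of Corollary \ref{cor:asymptotique}.
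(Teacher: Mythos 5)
Your argument is correct and rests on exactly the same two ingredients as the paper's: the equivalence between linear recurrence and rationality of the generating function $\sigma_{G,\a}$, and the growth sandwich $\tilde{C}(G,\a)\,k/(\ln(1+k))^m \leq \Sys(G,k\a) \leq C(G,\a)\,k/\ln(1+k)$ from Corollary \ref{cor:asymptotique}. The difference is purely one of presentation: the paper argues on the singularity side, noting that the radius of convergence is $1$ (Pringsheim, since $a_k \geq \sigma_m > 0$ while $a_k = O(k)$) so a rational $\sigma_\a$ would have a pole at $z=1$, and a pole on the boundary would force a polynomial-type growth of the coefficients incompatible with the sandwich; you argue on the coefficient side, expanding $a_k = \sum_i P_i(k)\mu_i^k$ and ruling out $|\mu_i| > 1$ by the upper bound, $|\mu_i| < 1$ only by the lower bound, and $|\mu_i| = 1$ by the non-vanishing of the dominant almost-periodic part $g(k)$ together with the upper bound $a_k/k^J \to 0$ for $J \geq 1$. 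Your version is more explicit and makes rigorous the implication the paper states in one line (in particular the Vandermonde/almost-periodicity lemma $\limsup_k |g(k)| > 0$, which you correctly identify as the only real technical point); it buys you a self-contained proof at the cost of a slightly longer write-up, whereas the paper's version is quicker but leaves the reader to reconstruct exactly the asymptotic analysis you carry out.

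One small imprecision worth fixing in a final write-up: the identity $\limsup_{k\to\infty}|a_k|/k^J = \sup_k |g(k)|$ is stronger than what you need and slightly off (you only get $\limsup_k |a_k|/k^J = \limsup_k |g(k)|$, and separately $\limsup_k |g(k)| > 0$ because a nonzero exponential polynomial with unimodular frequencies has positive mean square, $\lim_N \frac{1}{N}\sum_{k\leq N}|g(k)|^2 = \sum_i |c_i|^2 > 0$). The argument goes through unchanged with $\limsup_k|g(k)| > 0$ in place of $\sup_k|g(k)|$.
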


Nevertheless the rationality of  $\sigma_\a(z)$ seems plausible
for  classes $\a$ with a bounded  sequence of systolic volume
$\{\Sys(G,k\a)\}_{k = 1}^{\infty}$. Tori give a model of this type
of behaviour for multiple classes.

\begin{proposition}
Let $3\leq m\leq n$ be two integers. Any class $\a \in H_m(\Z^n,
\Z)$ satisfies the inequality
$$
\Sys(\Z^n,\a) \leq {n \choose m} \cdot \Sys(\T^m),
$$
where ${n \choose m}$ denotes the binomial coefficient.
\end{proposition}

\begin{proof} Fix a basis of $H_m(\Z^n, \Z)$ composed of
embedded $m$-tori, and write the class $\a$ in this basis:
$$
\a=\sum_{i=1}^{{n \choose m}} k_i [\T^m_i]
$$
where $k_i \in \Z$ for each $i$. By Corollary
\ref{cor:semiadditive},
\begin{equation} \label{eq:subad}
\Sys(G,\a)\leq \sum_{i=1}^{n \choose m} \Sys(G,k_i [\T^m_i]).
\end{equation}
Observe that
\begin{equation}\label{eq:subad2}
\Sys(\Z^n,k[\T_i^m]) \leq  \Sys(\T^m)
\end{equation}
for any integer $k$. In fact, if $f : \T^m \to \T_i^m$ denotes a
map of degree $k$, the geometric cycle $(\T^m,f)$ represents the
class $k[\T_i^m]$. By adding $1$-handles, we can normalize this
representation into a geometric cycle $(\T^m \# (S^1\times
S^{m-1}) \# \ldots \# (S^1\times S^{m-1}),\tilde{f})$. We get
\begin{eqnarray*}
\Sys(\Z^n,k[\T_i^m])&=&\Sys_{\tilde{f}}(\T^m \# (S^1\times
S^{m-1}) \# \ldots \# (S^1\times
S^{m-1}))\\
&\leq& \Sys(\T^m \# (S^1\times S^{m-1}) \# \ldots \# (S^1\times S^{m-1}))=\Sys(\T^m).\\
\end{eqnarray*}
Now we deduce the result by combining inequalities
(\ref{eq:subad}) and (\ref{eq:subad2}).
\end{proof}

We close this chapter with the following

\begin{conjecture}
{\it If $\a=[\T^m] \in H_m(\Z^m, \Z)$, then the associated
systolic generating function is}
$$
\sigma_\a(z) = \Sys(\T^m) \cdot {z \over 1 - z }.
$$
\end{conjecture}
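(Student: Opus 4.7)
The conjecture asserts that $\Sys(\Z^m,k[\T^m])=\Sys(\T^m)$ for every integer $k\geq 1$, equivalently that $\sigma_\a(z)=\Sys(\T^m)\,z/(1-z)$. The upper bound $\Sys(\Z^m,k[\T^m])\leq\Sys(\T^m)$ follows at once from the preceding proposition applied with $n=m$, since then $C^m_m=1$; concretely, the cycle $(\T^m,f)$ associated to a degree-$k$ self-map such as $f(x_1,\dots,x_m)=(kx_1,x_2,\dots,x_m)$ realizes $k[\T^m]$, and since the induced map $f_\sharp$ on $\pi_1$ is injective, its relative systole coincides with the ordinary systole for every polyhedral metric, whence $\Sys_f(\T^m)=\Sys(\T^m)$. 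The whole substance of the conjecture lies in the matching lower bound $\Sys(\Z^m,k[\T^m])\geq\Sys(\T^m)$.

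The plan for the lower bound is to produce, out of any geometric cycle $(X,f)$ realizing $k[\T^m]$ and endowed with a polyhedral metric $g$ normalized so that $\sys_f(X,g)\geq 1$, a polyhedral metric $g'$ on $\T^m$ satisfying $\sys(\T^m,g')\geq 1$ and $\vol(\T^m,g')\leq\vol(X,g)$. Such a construction would give $\Sys(\T^m)\leq\vol(X,g)\leq\Sys_f(X)+\e$ for every $\e>0$ and, passing to the infimum over all representatives of $k[\T^m]$, would deliver the desired inequality.

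The natural framework for building $g'$ is the universal $\Z^m$-cover. Let $H=\ker(f_\sharp:\pi_1(X)\to\Z^m)$, consider the cover $\tilde X\to X$ associated to $H$ with its lifted polyhedral metric $\tilde g$, and lift $f$ to a map $\tilde f:\tilde X\to\R^m$ equivariant under the image subgroup $f_\sharp(\pi_1(X))\subset\Z^m$. The relation $f_\ast[X]=k[\T^m]$ then translates into the statement that $\tilde f$ covers each fundamental domain of $\R^m$ with total homological multiplicity $k$. The metric $g'$ would be obtained by averaging $\tilde g$ along $\tilde f$: fix a common simplicial refinement of source and target, define the $g'$-volume of each simplex of $\T^m$ as a $\Z^m$-equivariant average of the $\tilde g$-volumes of its preimage simplices compensated by the homological degree $k$, and declare $g'$ piecewise flat on this refinement. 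Because non-contractible loops of $(\T^m,g')$ lift to non-closed paths in $\R^m$, their length would be bounded below by the length of corresponding paths in $(\tilde X,\tilde g)$, hence by the relative systole $\sys_f(X,g)\geq 1$.

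The main obstacle is to carry out this averaging while simultaneously preserving both the volume and the systole inequalities. The lift $\tilde f$ need not be Lipschitz in any uniform sense, it can fold, collapse entire regions, or cover $\R^m$ with highly varying local multiplicity; the degree-$k$ hypothesis constrains it only in an averaged homological sense; and one must accommodate the possible non-surjectivity of $f_\sharp$ onto $\Z^m$ together with the singularities of the pseudomanifold $X$. Reconciling all these features is the technical heart of the problem, and is presumably why the statement is left as a conjecture.
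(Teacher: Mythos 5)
The statement you were asked to prove is posed in the paper as a \emph{conjecture}, not a theorem: the paper offers no proof, so there is no argument in the source to compare yours against. What you have written is, appropriately, not a proof either, and you are right not to pretend otherwise.

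Your treatment of the upper bound $\Sys(\Z^m,k[\T^m])\leq\Sys(\T^m)$ is correct and, if anything, cleaner than invoking the full Proposition on $H_m(\Z^n,\Z)$: a degree-$k$ self-map $f$ of $\T^m$ with $f_\sharp$ injective on $\pi_1$ makes $\sys_f(\T^m,g)=\sys(\T^m,g)$ for every polyhedral metric $g$, hence $\Sys_f(\T^m)=\Sys(\T^m)$, and since $(\T^m,f)$ realizes $k[\T^m]$ the definition of $\Sys(\Z^m,k[\T^m])$ as an infimum over \emph{all} geometric cycles yields the bound directly, without the $1$-handle normalization the paper uses in its proof of (\ref{eq:subad2}). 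Combined with the general lower bound $\Sys(\Z^m,k[\T^m])\geq\sigma_m>0$ this already gives analyticity of $\sigma_\a$ on the unit disk and a dominating geometric series, but not the claimed equality of coefficients.

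You correctly identify the genuine open content: the matching lower bound $\Sys(\Z^m,k[\T^m])\geq\Sys(\T^m)$. The averaging scheme you sketch on the $\Z^m$-cover is a natural thing to try, and the obstacles you list are exactly the ones that defeat it as stated: the lift $\tilde f$ satisfies only a homological degree constraint, so it can fold, collapse and overshoot, and there is no control that would let an averaged metric $g'$ on $\T^m$ simultaneously keep $\sys(\T^m,g')\geq 1$ and $\vol(\T^m,g')\leq\vol(X,g)$. In addition one must handle non-surjective $f_\sharp$ and the singular locus of the pseudomanifold $X$, neither of which the averaging heuristic sees. So the proposal is an honest and accurate assessment of the state of the problem rather than a proof, which is the correct outcome for a statement the paper itself leaves open.
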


\section{The Heisenberg group, nilmanifolds and the Waring problem}\label{sec:nilp}

The nilpotent groups give particularily interesting examples: the
systolic volume of multiples of certain homology classes are
bounded, albeit certain of these multiples admit (non-normalized)
representations by manifolds whose systolic volume is not bounded.
This phenomena already appears in the simplest case of nilpotent
non abelian group, that is the Heisenberg group.


\subsection{Nilmanifolds and the Waring problem}
Consider a nilpotent group $G$ of finite type without torsion. The
classical result of  Mal'cev \cite{Malc49} implies that there
exists a simply connected nilpotent Lie group  $\G(G)$ such that
$G$ embeds in  $\G(G)$ as a lattice, that is as a cocompact
discret subgroup. Denote by $ \L(G)$ the Lie algebra of $\G(G)$
and suppose that $\L(G)$ is graded in the following way:
\begin{equation}\label{eq:gradue}
 \L(G) = \mathop{\oplus}\limits_{k=1}^s \L_k , \ \ [\L_i, \L_j] \subset \L_{i+j},
\end{equation}
where $\L_{i+j} = 0$ if $i + j > s$. We do not suppose here that
\begin{equation}\label{eq:gradue1}
\{\L_{(i)} = \mathop{\oplus}\limits_{k=i}^s \L_k\}_{i=1}^s
\end{equation}
is a lower central series, $s$ being not in general the
nilpotency class of  $\L(G)$. For any $t \in \R$,
a natural homothety $\delta_t$  is associated to the decompostion
(\ref{eq:gradue}) by the formula:
$$
\delta_t(v) = t^k v \ \ \mbox{if} \ \ v \in \L_k.
$$
This homothety $\delta_t$ is an endomorphism of $\L(G)$ for any
real parameter $t$. By the Baker-Campbell-Hausdorff formula, and
taking into account the structure (\ref{eq:gradue}), the homothety
$\delta_t$ generates a homothety $\Delta_t$ of $\G$.

\begin{definition}
The nilpotent  group $G$ is said graded if  there exists a
graduation (\ref{eq:gradue}) of the corresponding Lie algebra
$\L(G)$ such that for integer parameters the corresponding
homotheties of $\G(G)$   preserve the lattice $G \subset \G(G)$,
that is
$$
\Delta_n(G) \subset G, \ \forall n \in \Z.
$$
\end{definition}

If $G$ is a nilpotent graded group, let
$$
d(G) = \mathop{\sum}\limits_{k=1}^sk\dim \L_k
$$
be the weighted dimension of its corresponding Lie algebra $\L(G)$. Remark that if the sequence of subalgebras (\ref{eq:gradue1}) is the lower central series, $d(G)$ coincides with the degree of polynomial growth of $G$, see \cite{Wolf68} and \cite{Bass72}.\\

Denote by $M = \G(G)/G$ the nilmanifold corresponding to the
nilpotent group $G$. If $G$ is graded, the homothety  $\Delta_n:
\G(G) \longrightarrow \G(G)$ defined using the graduation induces
for every positive integer $n$ a map
$$
\widetilde{\Delta}_n: M \longrightarrow M
$$
of degree $n^d$ with $d = d(G)$. Let $\a = [M] \in H_m(G, \Z)$ be
the fundamental class of $M$ and $k$ be a positive integer. We can
represent the class $k\a$ by the connected sum of a uniformily
bounded number of copies of $M$ as follows. By a result of Hilbert
(see \cite{Ellis71}), there exists an integer ${\mathcal K}(d)$
such that
\begin{equation}\label{eq:decompose}
k = \mathop{\sum}\limits_{i=1}^s a_i^d,
\end{equation}
where each coefficient $a_i$ is a positive integer and $s \leq
{\mathcal K}(d)$. Now the class $k\a$ is represented by the
geometric cycle $(\#_{i=1}^s M_i,f)$ where $M_i \simeq M$ for $i =
1, \dots, s$ and
$$
f: \mathop{\#}\limits_{i=1}^s M_i \longrightarrow
\mathop{\bigvee}_{i=1}^s M_i
\mathop{\longrightarrow}^{{\mathop{\bigvee}\limits_{i=1}^s}\widetilde{\Delta}_{a_i}}
M_{\H},
$$
the first map being the contraction of the connected sum into a
wedge. We easily compute that $\mbox{deg}f = \sum_{i=1}^s a_i^d=k$
and so
$$
\Sys(G,k\a) \leq \Sys(\#_{i=1}^s M).
$$
Then we apply Proposition \ref{prop:sommeconnexe1} in order to
derive the following result.

\begin{theorem}\label{th:nil}
Let $G$ be a graded nilpotent group. If $\a=[\G(G)/G]$ denotes the
fundamental class of the corresponding nilmanifold, then
$$
\Sys(G,k\a) \leq {\mathcal K}(d(G)) \cdot \Sys(G,\a)
$$
for any positive integer $k$.
\end{theorem}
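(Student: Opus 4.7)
The proof is already essentially written in the paragraph immediately preceding the statement; the plan is just to tie off the remaining inequalities cleanly.

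First I would take the geometric cycle $(\#_{i=1}^{s} M_i, f)$ built there from a Waring decomposition $k = \sum_{i=1}^{s} a_i^{d(G)}$ with $s \leq {\mathcal K}(d(G))$ and $f = (\bigvee_i \widetilde{\Delta}_{a_i}) \circ p$, where $p$ contracts the connected sum to the wedge. Since each $\widetilde{\Delta}_{a_i}$ has degree $a_i^{d(G)}$ and $p$ has degree one, the total degree is $\sum_i a_i^{d(G)} = k$, so $f_{\ast}[\#_{i=1}^{s} M_i] = k\a$ and the cycle represents the class $k\a$. By Definition \ref{def:sys},
$$
\Sys(G, k\a) \leq \Sys_f(\#_{i=1}^{s} M) \leq \Sys(\#_{i=1}^{s} M).
$$

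Next I would iterate Proposition \ref{prop:sommeconnexe1} (subadditivity of systolic volume under connected sum for orientable pseudomanifolds of dimension $\geq 3$) to conclude
$$
\Sys(\#_{i=1}^{s} M) \leq s \cdot \Sys(M) \leq {\mathcal K}(d(G)) \cdot \Sys(M).
$$
Finally, because $M$ is a closed smooth manifold whose classifying map realizes the identification $\pi_1(M)=G$, the results of \cite{Bab06, Bab08, Bru07} recalled in the introduction give $\Sys(M) = \Sys(G, \a)$, and combining the three inequalities yields the claimed bound.

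The dimension hypothesis of Proposition \ref{prop:sommeconnexe1} is satisfied whenever $\dim M \geq 3$; the only lower-dimensional nilmanifolds are $S^1$ and $T^2$, both reducing to the abelian (torus) case already treated in the paper. There is no real obstacle in this argument: everything rests on the degree identity coming from the Waring decomposition and on two ingredients (Proposition \ref{prop:sommeconnexe1} and the manifold-representability principle) already established earlier in the text. The only step requiring minor care is bookkeeping the degree through the wedge-and-fold construction to justify $f_{\ast}[\#_{i=1}^{s} M_i] = k\a$.
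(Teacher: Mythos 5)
Your proof is correct and follows the paper's argument essentially verbatim: the Waring decomposition $k=\sum a_i^{d(G)}$, the degree-$k$ map $\#_{i=1}^s M_i \to \bigvee_i M_i \to M$ representing $k\a$, and the iteration of Proposition \ref{prop:sommeconnexe1} together with the identification $\Sys(M)=\Sys(G,\a)$ from \cite{Bab06, Bab08, Bru07}. The extra remarks you add (the explicit degree bookkeeping, the low-dimensional cases) are harmless clarifications rather than a departure from the paper's route.
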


\subsection{Family of lattices in the Heisenberg group}
Consider the Heisenberg group of dimension $3$ composed of the
following set of upper triangular matrices:
$$
\H= \left\{\left(
\begin{array}{ccc}
1 & x & z \\
0 & 1 & y \\
0 & 0 & 1
\end{array}
\right) , \, x, y, z \in \R \right\}.
$$
The subset $\H(\Z)$ of matrices of $\H$ with integer coefficients
({\it i.e.} for which $x, y, z \in \Z$) is a lattice, and we
denote by $M_{\H} = \H / \H(\Z)$ the corresponding nilmanifold.
The fundamental group $\H(\Z)$ of $M_{\H}$ satisfies the
assumptions of Theorem \ref{th:nil}. In fact the homotheties
$\{\Delta_t\}_{t>0}$ are given by the formula
$$\Delta_t
\left(
\begin{array}{ccc}
1 & x & z \\
0 & 1 & y \\
0 & 0 & 1
\end{array}
\right) = \left(
\begin{array}{ccc}
1 & tx & t^2z \\
0 & 1 & ty \\
0 & 0 & 1
\end{array}
\right),
$$
so $\Delta_n(\H(\Z)) \subset \H(\Z)$ for any integer $n\geq 1$.
The map $\Delta_n$ factorizes through a map
$$
\widetilde{\Delta}_n : M_{\H} \longrightarrow M_{\H}
$$
for which $\mbox{deg} (\widetilde{\Delta}_n) = n^4$. The
resolution of the Waring problem for the sum of fourth powers (see
\cite{BDD86}) gives that any integer number decomposes into a sum
of at most $19$ fourth powers. That is, with the notation of
Theorem \ref{th:nil}, we have $d(\H(\Z))=4$ and ${\mathcal K}(4) =
19$. Theorem \ref{th:Heisintro} of the introduction now easily follows.

\begin{corollary}\label{coro:nil1}
Let $\a = [M_{\H}] \in H_3(\H(\Z), \Z)$ be the fundamental class
of $M_\H$. Then
$$
\Sys(\H(\Z),k\a) \leq 19 \cdot \Sys(\H(\Z),\a)
$$
for any positive integer $k$.
\end{corollary}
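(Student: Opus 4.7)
The plan is to show that this corollary is a direct specialization of Theorem \ref{th:nil} to the pair $(G,\a) = (\H(\Z), [M_\H])$, so the work consists in verifying that $\H(\Z)$ is a graded nilpotent group in the required sense, computing its weighted dimension $d(\H(\Z))$, and plugging in the known value of the Waring constant ${\mathcal K}(d(\H(\Z)))$.

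First I would exhibit the graduation of $\L(\H)$. A natural basis $X,Y,Z$ with the single nontrivial bracket $[X,Y]=Z$ yields the decomposition $\L_1 = \langle X,Y\rangle$, $\L_2 = \langle Z\rangle$, which satisfies $[\L_i,\L_j]\subset \L_{i+j}$. The induced homotheties $\delta_t$ act by $t$ on $\L_1$ and by $t^2$ on $\L_2$, and exponentiating through Baker--Campbell--Hausdorff produces exactly the explicit homotheties $\Delta_t$ displayed just after the statement of Corollary \ref{coro:nil1}. From that explicit formula one reads off directly that $\Delta_n(\H(\Z))\subset \H(\Z)$ for every $n\in\Z$, which is the definition of $\H(\Z)$ being graded in the sense of the preceding subsection.

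Next I would compute $d(\H(\Z)) = 1\cdot\dim\L_1 + 2\cdot\dim\L_2 = 1\cdot 2 + 2\cdot 1 = 4$, which matches the fact that the induced self-map $\widetilde{\Delta}_n : M_\H \to M_\H$ has degree $n^4$ (a Jacobian computation on the fundamental domain, consistent with the polynomial growth exponent of $\H(\Z)$). With $d=4$ in hand, the Hilbert--Waring theorem for fourth powers gives the known sharp bound: every positive integer is a sum of at most $19$ fourth powers, i.e.\ ${\mathcal K}(4)=19$ (this is the cited result of \cite{BDD86}).

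Finally, I would invoke Theorem \ref{th:nil} with $G=\H(\Z)$ and $\a=[M_\H]$. Its conclusion reads
\[
\Sys(\H(\Z),k\a) \leq {\mathcal K}(d(\H(\Z)))\cdot \Sys(\H(\Z),\a) = 19\cdot\Sys(\H(\Z),\a),
\]
which is exactly the inequality of the corollary. There is no real obstacle here: the only nontrivial ingredient is Theorem \ref{th:nil} itself, whose proof already uses the Waring decomposition $k=\sum a_i^4$ to build a degree-$k$ map from a connected sum of at most $19$ copies of $M_\H$ onto $M_\H$ and then applies Proposition \ref{prop:sommeconnexe1}. Our task is merely to check that the Heisenberg data satisfy the hypotheses, which amounts to the two short computations above (grading preservation and $d=4$).
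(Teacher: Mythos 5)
Your proof is correct and follows exactly the same route as the paper: exhibit the grading of the Heisenberg Lie algebra (equivalently, read off the explicit homotheties $\Delta_t$), check $\Delta_n(\H(\Z))\subset\H(\Z)$, compute $d(\H(\Z))=4$ so that $\deg\widetilde{\Delta}_n = n^4$, invoke the Waring bound ${\mathcal K}(4)=19$ from \cite{BDD86}, and conclude by Theorem \ref{th:nil}. No discrepancies.
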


The different lattices of $\H$ give rise to nilmanifolds whose
systolic behaviour is particularly interesting. Consider the
sequence of lattices $\{\H_n(\Z)\}_{n=1}^{\infty}$ of $\H$, where
$\H_n(\Z)$ is the subset of matrices of $\H$ such that  $x \in
n\Z$ and $y, z \in \Z$. Denote by $M_n = M_{\H_n} = \H / \H_n(\Z)$
the corresponding nilmanifolds. The manifold $M_n$ is a cyclic
covering of $M_{\H}$ with  $n$ sheets, so
\begin{equation}\label{eq:bornesup3}
\Sys(M_n) \leq C\frac{n}{\ln n},
\end{equation}
according to the version of Theorem \ref{th:sommeconnexe} for
cyclic coverings. The fact that the function $\Sys(M_n)$ goes to
infinity is not obvious. For instance the simplicial volume of
these manifolds is zero, and thus the corresponding lower bound
(see Corollary (\ref{cor:asymptotique})) does not apply. We are nevertheless able to prove the following proposition, see \cite{BBB14}.

\begin{proposition}\cite{BBB14}\label{prop:nil2}
The function $\Sys(M_{\H_n})$ satisfies the following inequality:
$$
\Sys(M_{\H_n}) \geq a\frac{\ln n}{\exp(b\sqrt{\ln \ln n})} ,
$$
where $a$ an $b$ are two positive constants. In particular,
$$
\lim_{n\to +\infty} \Sys(M_{\H_n})=+\infty.
$$
\end{proposition}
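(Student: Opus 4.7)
The plan is to apply Theorem \ref{th:torsion} and reduce the question to showing $t_1([M_n]) \geq n$, where $[M_n] \in H_3(\H_n(\Z), \Z)$ denotes the fundamental class. The bound $|\mbox{Tors}\, H_1(M_n, \Z)| = n$ is immediate from the presentation $\langle X_n, Y, Z_0 \mid [X_n, Y] = Z_0^n,\ [X_n, Z_0] = [Y, Z_0] = 1 \rangle$ of $\H_n(\Z)$, giving $H_1(M_n, \Z) = \Z^2 \oplus \Z_n$; the content of the argument is that \emph{every} geometric cycle representing $[M_n]$ inherits this much first-homology torsion. This is the Heisenberg analogue of Lemma \ref{lemma:representation}.

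Since $M_n$ is aspherical, $M_n = K(\H_n(\Z),1)$, and I would compute $H^*(M_n, \Z_n)$ via the circle bundle $S^1 \to M_n \to T^2$ whose integral Euler class $n$ vanishes mod $n$. The associated Serre spectral sequence collapses at $E_2$, giving $H^*(M_n, \Z_n) \simeq \Lambda(u_1, u_2, u_3)$ with $u_i$ dual to $X_n, Y, Z_0$; in particular $u_1 \cup u_2 \cup u_3$ generates $H^3(M_n, \Z_n) \cong \Z_n$. The key step is the Bockstein identity
$$
\beta(u_3) = -\,u_1 \cup u_2 \quad \mbox{in} \quad H^2(M_n, \Z_n),
$$
which I check by a direct cocycle computation: lifting $u_3$ via $\tilde{u}_3(X_n^a Y^b Z_0^c) = c$ and exploiting the canonical-form product $(X_n^a Y^b Z_0^c)(X_n^{a'} Y^{b'} Z_0^{c'}) = X_n^{a+a'} Y^{b+b'} Z_0^{c+c' - n a' b}$ in $\H_n(\Z)$, one finds $d\tilde{u}_3(g, h) = n\, a' b$, whence $\beta(u_3)(g, h) = a' b = (u_2 \smile u_1)(g, h)$, and graded commutativity delivers $\beta(u_3) = -u_1 \cup u_2$.

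Given any representation $(X, f)$ of $[M_n]$, the identity $f_\ast [X]_n = [M_n]_n$ in mod-$n$ homology shows that $f^\ast u_3 \cup \beta(f^\ast u_3) = -f^\ast(u_1 \cup u_2 \cup u_3)$ is a class of order exactly $n$ in $H^3(X, \Z_n) \cong \Z_n$. By $\Z_n$-bilinearity of the cup product, the Bockstein class $\beta(f^\ast u_3) \in H^2(X, \Z_n)$ also has order $n$. Factoring the mod-$n$ Bockstein as $\beta = j \circ \tilde\beta$, where $\tilde\beta: H^1(-, \Z_n) \to H^2(-, \Z)$ is the integral Bockstein and $j$ is mod-$n$ reduction, we conclude that $\tilde\beta(f^\ast u_3) \in H^2(X, \Z)$ is a torsion element of order at least $n$. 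By the universal coefficient isomorphism $\mbox{Tors}\, H^2(X, \Z) \simeq \mbox{Tors}\, H_1(X, \Z)$, this gives $|\mbox{Tors}\, H_1(X, \Z)| \geq n$. Taking the infimum over geometric cycles yields $t_1([M_n]) \geq n$, and substitution into Theorem \ref{th:torsion} with $m=3$ produces the asserted inequality. The main obstacle is the Bockstein identity $\beta(u_3) = -u_1 \cup u_2$; conceptually it reflects that the central extension $0 \to \Z \cdot Z_0 \to \H_n(\Z) \to \Z^2 \to 0$ has class $n \cdot (a \cup b) \in H^2(\Z^2, \Z \cdot Z_0)$, and the dual of $Z_0$ realizes this $n$-divisibility after mod-$n$ reduction.
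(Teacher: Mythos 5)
Your proof follows the same strategy as the paper's: show that every degree-one geometric cycle $(X,f)$ over $M_n$ pulls back the Bockstein class to an element of order $n$ in $\mbox{Tors}\, H^2(X,\Z)$, invoke the universal coefficients duality $\mbox{Tors}\, H^2(X,\Z)\simeq\mbox{Tors}\, H_1(X,\Z)$ to get $t_1([M_n])\geq n$, and apply Theorem \ref{th:torsion}. Your additional contribution is the explicit cocycle verification of the Bockstein identity $\beta(u_3)=-u_1\cup u_2$ and the Serre spectral sequence description of $H^*(M_n,\Z_n)$, which the paper leaves implicit as ``well known''; one small imprecision is the assertion $H^3(X,\Z_n)\cong\Z_n$ (a pseudomanifold need not satisfy $\Z_n$-Poincar\'e duality), but the argument only requires that $f^*(u_1\cup u_2\cup u_3)$ has order $n$, which follows from pairing against $[X]_n$.
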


As the cover $M_n \longrightarrow M_{\H}$ has $n$ sheets, the
manifold $M_n$ represents the class $n[M_{\H}] \in H_3(\H(\Z),
\Z)$ for any positive integer $n$. This representation is not
normalized, and Corollary \ref{coro:nil1} together with
Proposition \ref{prop:nil2} shows that the assumption of
normalization can not be dropped in Proposition \ref{thm:realisation}.


\begin{thebibliography}{99}



\bibitem[Baas73]{Baas73}
Baas, N. A. : {\it On bordism theory of manifolds with
singularities}. Math. Scand. {\bf 33} (1973), 279-302.

\bibitem[Bab92]{Bab92}
Babenko, I. : {\it Asymptotic invariants of smooth manifolds}.
Izv. Ross. Akad. Nauk Ser. Mat.  {\bf 56}  (1992),  707-751 ;
translated in  Russian Acad. Sci. Izv. Math.  {\bf 41}  (1993),
1-38.

\bibitem[Bab06]{Bab06}
Babenko, I. : {\it Topologie des systoles unidimensionnelles}.
Enseign. Math. {\bf 52} (2006),  109-142.

\bibitem[Bab08]{Bab08}
Babenko, I. : {\it Addenda \`a l'article intitul\'e ``Topologie
des systoles unidimensionnelles''.} Enseig. Math.
{\bf 54} (2008),  397-398.



\bibitem[BB05]{BB05}
Babenko, I. \& Balacheff, F.: {\it G\'eom\'etrie systolique des
sommes connexes et des rev\^etements cycliques}. Math. Ann. {\bf
333} (2005), 157-180.


\bibitem[BBB14]{BBB14}
Babenko, I., Balacheff, F. \& Bulteau, G.: {\it Systolic geometry and simplicial complexity of groups}. In preparation.


\bibitem[BDD86]{BDD86}
Balasubramanian, R., Deshouillers, J.-M. \&  Dress, F. : {\it
Probl\`eme de Waring pour les bicarr\'es, 1, 2}. C.R. Acad. Sci.
Paris s\'er. 1 Math. {\bf 303} (1986), 85-88 et 161-163.

\bibitem[Bass72]{Bass72}
Bass, H. : {\it The degree of polynomial growth of finitely
generated nilpotent groups}. Proc. London Math. Soc. {\bf 25}
(1972), 603-614.

\bibitem[Bru08]{Bru07}
Brunnbauer, M. : {\it Homological invariance for asymptotic
invariants and systolic inequalities}. Geom. Funct. Anal. {\bf 18} (2008),
1087-1117.

\bibitem[BS94]{BS94}
Buser, P. \& Sarnak, P. : {\it On the period matrix of a Riemann
surface of large genus.} With an appendix by J. H. Conway and N.
J. A. Sloane. Invent. Math. {\bf 117} (1994), 27-56.



\bibitem[Cerf61]{Cerf61}
Cerf, J. : {\it Topologie de certains espaces de plongements}.
Bull. S.M.F. {\bf 89} (1961), 227-380.

\bibitem[Eps66]{Eps66}
Epstein, D.B.A. : {\it The degree of a map}. Proc. London Math.
Soc. (3) {\bf 16} (1966), 369-383.

\bibitem[Ellis71]{Ellis71}
Ellison, W. J. : {\it Waring's problem}. Amer. Math.
Monthly, {\bf 78} (1971), 10-36.


\bibitem[ES63]{ES63}
Erd\"os, P. \& Sachs, H.: {\it Regul\"are Graphen gegebener Taillenweite mit minimaler Knotenzahl}.
Wiss. Z. Martin-Luther-Univ.Halle-Wittenberg, {\bf 12} (1963), 251-257.


\bibitem[Gro82]{Gro82}
Gromov, M. : {\it Volume and bounded cohomology}. IHES Publ. Math.
{\bf 56} (1982), 5-99.


\bibitem[Gro83]{Gro83}
Gromov, M. : {\it Filling Riemannian manifolds}. J. Diff. Geom.
{\bf 18} (1983), 1-147.

\bibitem[Gro96]{Gro96}
Gromov, M. : {\it Systoles and intersystolic inequalities}. Actes
de la table ronde de g\'eom\'etrie diff\'erentielle en l'honneur
de Marcel Berger {\it Collection SMF} {\bf 1}(1996), 291-362.






\bibitem[Hat02]{Hat02}
Hatcher, A. : {\it Algebraic Topology}. Cambridge University Press
(2002).

\bibitem[Hirz58]{Hirz58}
Hirzebruch, F. : {\it Komplexe Mannigfaltigkeiten}. Proceedings
1958 International Congress of Mathematics, Cambridge (1960),
119-136.



\bibitem[Malc49]{Malc49}
Mal'cev, A. I. : {\it On a class of homogeneous spaces}. Izvestiya
Akademii Nauk SSSR, ser. Math. {\bf  13} (1949), 9-32.

\bibitem[Mil60]{Miln60}
Milnor, J. : {\it On the cobordisme ring $\Omega^*$ and a complexe
analogue, Part I}. Amer. J. Math. {\bf 82} (1960), 505-521.

\bibitem[Nov62]{Nov62}
Novikov, S. : {\it Homotopical properties of Thom complexes}. Mat.
Sbornik {\bf 57} (1960) No 4, 406-442.



\bibitem[RS08]{RS08}
Rudyak, Y. \& Sabourau, S. {\it Systolic invariants of groups and
2-complexes via Grushko decomposition}. Ann. Inst.
Fourier {\bf 58} (2008), 777-800.


\bibitem[Sab06]{Sab06}
Sabourau, S. : {\it Systolic volume and minimal entropy of
aspherical manifolds}.
      J. Diff. Geom. {\bf 74} (2006), 155-176.

\bibitem[Sab07]{Sab07}
Sabourau, S. : {\it Systolic volume of hyperbolic manifolds and
connected sums of manifolds}. Geom. Dedicata {\bf 127} (2007),
7-18.


\bibitem[Sto68]{St68}
Stong, R. : {\it Notes on cobordisme theory}. Princeton Univ.
Press, Princeton, N.J. (1968).

\bibitem[Thom54]{Thom54}
Thom, R.: {\it Quelques propri\'et\'es globales des vari\'et\'es diff\'erentiables}.
Comment. Math Helv. {\bf 28} (1954), 17-86.


\bibitem[Wolf68]{Wolf68}
Wolf, J. : {\it Growth of finitely generated solvale groups and
curvature of Riemannian manifolds}. J. Diff. Geom. {\bf 2} (1968),
421-446.


\end{thebibliography}
\end{document}